\documentclass[reqno,11pt]{amsart}
\usepackage{amssymb}
\usepackage[mathscr]{eucal}
\usepackage{eufrak}
\usepackage{graphicx}
\usepackage{float}
\usepackage{subfigure}
\usepackage{xspace}
\usepackage{tikz}
\usepackage{multirow}
\usepackage{rotating,makecell}
\usepackage[pagewise]{lineno}


\newcommand{\dd}{\mathrm{d}}

\newcommand{\D}{\mathbb{\D}}

\newcommand{\LL}{\mathcal{L}}


\textwidth=15.5cm \textheight=21.5cm \hoffset=-1cm \voffset=0cm

\theoremstyle{plain}
\newtheorem{theorem}{Theorem}[section]

\newtheorem{lemma}{Lemma}[section]

\theoremstyle{definition}
\newtheorem{definition}{Definition}[section]
\theoremstyle{remark}

\newtheorem{remark}{Remark}[section]
\numberwithin{equation}{section}
\numberwithin{figure}{section}

\begin{document}
\title[Hypersonic-Limit Euler Flows and Measure Solutions] {On Two-Dimensional Steady Hypersonic-Limit Euler Flows Passing Ramps and Radon Measure Solutions of Compressible Euler Equations}

\author{Yunjuan Jin}
\author{Aifang Qu}
\author{Hairong Yuan}

\address[Y. Jin]{Center for Partial Differential Equations, School of Mathematical Sciences,
East China Normal University, Shanghai
200241, China}
\email{\tt 52185500019@stu.ecnu.edu.cn}

\address[A. Qu]{Department of Mathematics, Shanghai Normal University,
Shanghai,  200234,  China} \email{\tt afqu@shnu.edu.cn, aifangqu@163.com}

\address[H. Yuan]{Corresponding author. School of Mathematical Sciences and Shanghai Key Laboratory of Pure Mathematics and Mathematical Practice,
East China Normal University, Shanghai
200241, China}
\email{\tt hryuan@math.ecnu.edu.cn}

\subjclass[2010]{35L65, 35L67, 35Q31, 35R06, 35R35, 76K05}
 \keywords{Compressible Euler equations; hypersonic; Newton-Busemann pressure law; shock layer; free layer; Dirac measure; measure solution; vacuum; singular Riemann problem.}
\date{\today}

\begin{abstract}
We proposed  rigorous definitions of Radon measure solutions for boundary value problems of steady compressible Euler equations which modeling hypersonic-limit inviscid flows passing two-dimensional ramps, and their interactions with still gas and pressureless jets.   We proved the Newton-Busemann pressure law of drags of body in hypersonic flow, and constructed various physically interesting measure solutions with density containing Dirac measures supported on curves, also exhibited examples of blow up of certain measure solutions. This established a mathematical foundation for applications in engineering and further studies of measure solutions of compressible Euler equations.
\end{abstract}
\maketitle

\tableofcontents

\section{Introduction} \label{sec1}

In gas dynamics, supersonic flow with Mach number greater than five is called hypersonic flow, which bears some peculiar features \cite[Section 15.2]{Anderson1}. For example, as the Mach number of the flow goes to infinity, it behaves like moving particles without thermal motions (hence pressure approaches zero); when passing a slender body, {\it shock layer, } i.e., the region bounded by the surface of the body and the shock appeared in front of it, becomes thinner and thinner, and ultimately mass concentrates in an infinite-thin shock layer. There is also a {\it Mach number independence law} (see \cite[Section 15.5]{Anderson1} or \cite[p.24]{Hayes2004hypersonic}), which claims that two flow fields with large but different upstream flow Mach numbers are not different from each other in any fundamental way.

These physical observations imply that there is a limit of hypersonic flow problem, and the limiting flow field cannot be described by Lebesgue measurable functions anymore. Actually a suitable concept of measure solutions of the compressible Euler equations should be introduced to illustrate the above observations and put related physical arguments upon a solid mathematical foundation. However, it is somewhat surprising that there is no such mathematical work before the paper \cite{Qu2018Hypersonic}. In \cite{Qu2018Hypersonic} the  authors showed that for steady Euler flows of polytropic gases, after suitable scaling, the Mach number goes to infinity means actually that the adiabatic exponent goes to $1$. By proposing a definition of measure solutions for supersonic flow passing a two-dimensional straight wedge, the authors verified the above physical observations mathematically, and derived naturally the Newton's sine square pressure law. See \cite{Anderson1,Anderson2006hypersonic,Hayes2004hypersonic,LO1991} for the background and physical theory of hypersonic flows, especially \cite[Chapter 3]{Hayes2004hypersonic} for a detailed introduction to the Newton's theory of hypersonic flow. In \cite{Qu2018High,Qu2018measure}, the authors also studied the related problems of measure solutions of high Mach number limits of piston problems for polytropic gases and Chaplygin gas. These papers demonstrate that the concept of measure solutions we proposed works well for these fundamental physical problems.

In this paper we are going to study three typical problems about limiting hypersonic flows passing bodies, with emphasis on explicit solutions derived from rigorous mathematical theory. The first is on hypersonic-limit flow passing a curved ramp and to derive the Newton-Busemann pressure law \cite[p.133]{Hayes2004hypersonic}. The second is to study the interactions of hypersonic limit flow and still gas in a ``dead gas zone". The third is on limiting hypersonic flow interacts  with pressureless jets. For the latter two problems,  {\it free layer} (called ``delta shock" in the literature of mathematics) appears in the flow field that separates gases with different states.
We calculate special measure solutions to understand these physical problems, and find some interesting new phenomena, such as blow up of measure solutions in a finite distance,  which  exhibit the great  power of a proper concept of measure solutions to the Euler equations.

In the rest of this section we firstly present  the problems, and the concept of measure solutions, as well as the main results. After that, we review briefly some related mathematical works on measure solutions of hyperbolic conservation laws, at the end of the section.

\subsection{Formulation of three problems}\label{sec11}

In the Euclidean plane $\mathbb{R}^2$ with Cartesian coordinates $(x,y)$, the   two-dimensional steady non-isentropic compressible Euler equations take the form \cite[Section 6.2]{Anderson1}
\begin{equation}\label{1.1}
\left \{
\begin{aligned}
&\partial_{x}(\rho u)+\partial_{y}(\rho v)=0,\\
&\partial_{x}(\rho u^{2}+p)+\partial_{y}(\rho uv)=0,\\
&\partial_{x}(\rho uv)+\partial_{y}(\rho v^{2}+p)=0,\\
&\partial_{x}(\rho uE)+\partial_{y}(\rho vE)=0,\\
\end{aligned} \right.
\end{equation}
which can also be written as
\begin{equation}\label{1.4}
\partial_{x}F(U)+\partial_{y}G(U)=0, \ \  U=(\rho,u,v,E)^{\top},
\end{equation}
where
$$F(U)=(\rho u,\ \rho u^{2}+p,\ \rho uv,\ \rho uE)^{\top},$$
$$G(U)=(\rho v,\ \rho uv,\ \rho v^{2}+p,\ \rho vE)^{\top}.
$$
Here $\rho,p,(u,v)$ represent respectively the density of mass, pressure, and velocity of the gas; the function $E$ is given by
\begin{equation}\label{1.2}
E=\frac{1}{2}(u^{2}+v^{2})+\dfrac{\gamma}{\gamma-1}\dfrac{p}{\rho},
\end{equation}
where $\gamma>1$ is the adiabatic exponent, appeared in the state function of polytropic gases:  $$p=\kappa \rho^{\gamma}\exp{(\frac{\hat{S}}{c_{\upsilon}})},$$
with $\hat{S}$ the entropy, and  $\kappa, c_{\upsilon}$ being positive constants. However, we emphasize that in this work  we shall use
\begin{equation}\label{1.3}
p=\dfrac{\gamma-1}{\gamma}\rho(E-\dfrac{1}{2}(u^{2}+v^{2})),
\end{equation}
which is solved from \eqref{1.2},
as the state function of the gas. It includes polytropic gases ($\gamma>1$) and pressureless gas ($\gamma=1$).

It is well-known that to study weak solutions, one shall choose the correct representations of the Euler equations and the state function. Previous works \cite{Qu2018Hypersonic, Qu2018High, Qu2018measure} have shown that \eqref{1.1} and \eqref{1.3} are the proper starting point to study physical problems of limiting hypersonic flows and general Radon measure solutions of compressible Euler equations.

\subsubsection{Problem 1}\label{sec111}
Consider the problem of supersonic flow passing  an infinite solid  ramp $\{(x,y)\in \Bbb R^{2}:~ x\in\mathbb{R}, \ y\leq b(x)\}$, where $b(x)$ is a given continuous function, satisfying $b(x)=0$ for $x\le0$, $b(x)\in C^{2}$ and $b'(x)\geq 0$ for $x>0$ (see Figure \ref{fig1}). It is a classic problem in gas dynamics, and has been studied extensively (see, for example, \cite{Chen2006Existence,Hu2018The,Hu2017Global} and references therein). Actually it's Hu and Zhang's work \cite{Hu2018The,Hu2017Global} that motivates us to study the hypersonic-limit flow.

To  formulate the problem, denote the region filled with gas by
$$
\Omega\triangleq\{(x,y)\in \Bbb R^{2}:~x>0,\ y>b(x)\}.
$$
The surface of the ramp is then
$$W\triangleq\{(x,y)\in \Bbb R^{2}:~x\geq 0,\ y=b(x)\},$$
on which we propose the slip condition
\begin{equation}\label{1.5}
\begin{aligned}
&v=b'(x) u   \ \ \  \text{on}\  W. \\
\end{aligned}
\end{equation}

Without loss of generality ({cf.} \cite{Qu2018Hypersonic} for some non-dimensional  scalings), we may assume that the uniform upcoming supersonic flow is
\begin{equation}\label{1.6}
\begin{aligned}
&U=U_{0}\triangleq(\rho_0,u_0,v_0,E_0)^\top=(1,1,0,E_{0})^{\top} \ \   \text{on} \ x=0, \  y>0, \\
\end{aligned}
\end{equation}
where $E_{0}>{1}/{2}$ is a given constant.
From \cite{Qu2018Hypersonic}, we know that the Mach number of the upcoming flow $M_{0}=\infty$ equals that $\gamma=1$, hence by \eqref{1.3} one has $p_{0}=0$. Therefore limiting hypersonic flow is pressureless Euler flow if there is no physical boundary.

\smallskip

{\bf Problem~1~~} Find a solution to the initial-boundary value problem \eqref{1.1}, \eqref{1.3}-\eqref{1.6} in $\Omega$ when $\gamma=1$.

\vspace{-3cm}
\begin{figure}[H]
\centering
\includegraphics[width=6in]{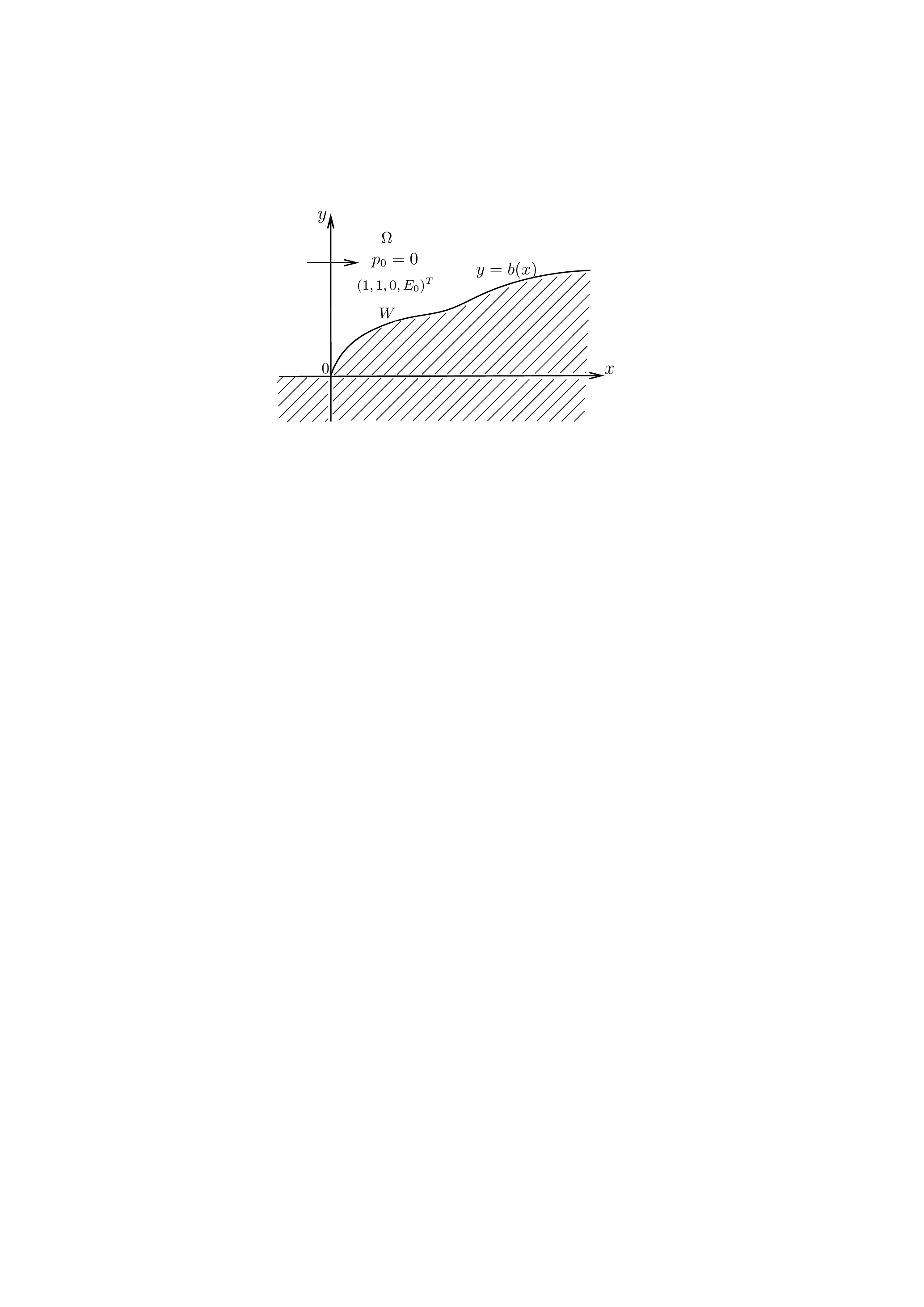}\\
\vspace{-14cm} \caption{Limiting hypersonic flow passing an infinite ramp.}\label{fig1}
\end{figure}

\subsubsection{Problem 2}\label{sec112}
For limiting hypersonic flow passing a finite ramp,
we consider the case that there is a cliff $W_{e}\triangleq\{(x,y)\in \Bbb R^{2}:~x=x_{*},\ y<b(x_{*})\}$, where $x_*>0$ is given, and there is static uniform gas near the cliff:
\begin{equation}\label{1.7}
\underline{U}\triangleq(\underline{\rho},0,0,\underline{E})^{\top},
\end{equation}
where $\underline{\rho}>0, \ \underline{E}>0$ are given constants. From \eqref{1.3} we then solve the pressure $\underline{p}\geq0$. In particular for $\underline{p}=0$ we have pressureless static flow. This problem was proposed in \cite[p.148]{Hayes2004hypersonic} and discussed in a sketchy and qualitative style there.

For limiting hypersonic flow passing the cliff, we assume that there will appear a curve, called {\it free layer} in \cite{Hayes2004hypersonic}, to separate the limiting hypersonic flow above it from the static gas below it. Let the free layer be $$S\triangleq\{(x,y)\in \Bbb R^{2}:~ x\geq x_{*},\ y=s(x)\},$$
where $y=s(x)$ is a function to be solved. Set $\Omega_{2}$ be the region bounded by $W_e$ and $S$, which is the region occupied by the uniform static gas (see Figure \ref{fig2}). For convenience, we also define $\Omega_1$ to be the region bounded by the positive $y$-axis and $W_f\cup S$, where $$W_{f}\triangleq\{(x,y)\in \Bbb R^{2}:~0\leq x\leq x_{*},\ y=b(x)\}$$ is the surface of the finite ramp.

We still propose the slip condition:
\begin{equation}\label{1.8}
\begin{aligned}
&v=b'(x) u   \ \ \  \text{on}\  W_{f}.
\end{aligned}
\end{equation}

\begin{remark}
For convenience of later reference, we emphasize that on the free layer, it also holds the slip condition
\begin{equation}\label{1.9}
\begin{aligned}
&v=s'(x) u   \ \ \  \text{on}\  S,
\end{aligned}
\end{equation}
which can be derived naturally from the definition of measure solutions that will be given below. Note that $(u,v)$ in \eqref{1.9} is the velocity of concentrated particles on the free layer, which is usually different from the velocity of the gas that is  close to the free layer.  See Remark \ref{rm31}.
\end{remark}

\begin{figure}
\centering
\includegraphics[width=6in]{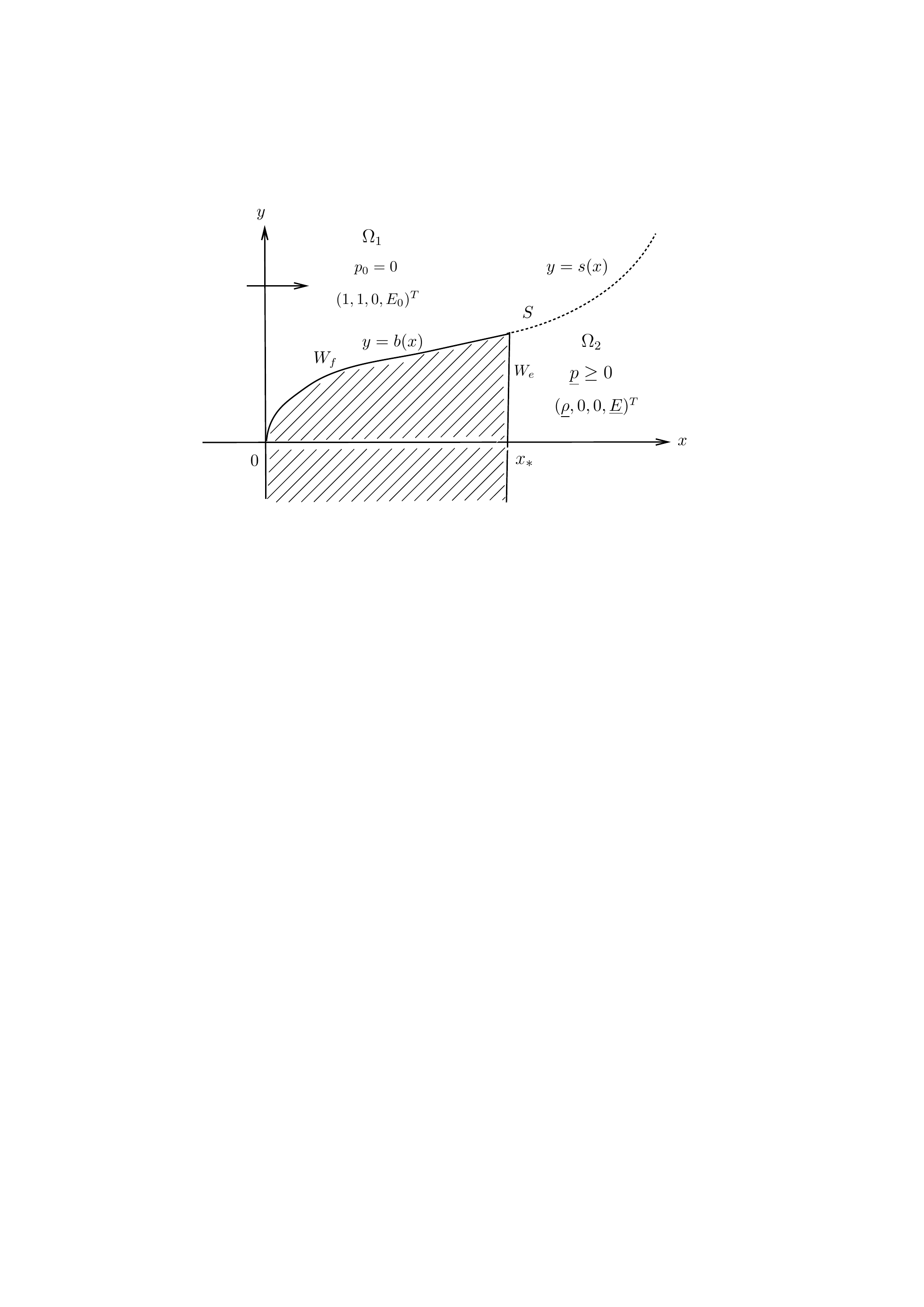}\\
\vspace{-13cm} \caption{Interaction of limiting hypersonic flow in $\Omega_1$ with uniform static gas in $\Omega_2$.}\label{fig2}
\end{figure}

{\bf Problem~2~~} Find a solution to the transonic three-phase flow \footnote{Three phases are limiting hypersonic flow, static (polytropic) gas, and the free layer itself. For the limiting hypersonic flow, the Euler equations are hyperbolic, while for polytropic static gas, it is subsonic and the governing Euler equations are of degenerate elliptic-hyperbolic composite type. Hence the whole flow field is transonic.} problem \eqref{1.4}, \eqref{1.3}, \eqref{1.6}-\eqref{1.8}.

\smallskip
We remark that although this problem looks quite similar to those studied in \cite{Chen2013Stability,Chen2013Well} for supersonic polytropic gas flow passing over a ``dead gas zone", the results and methods are quite different. For the latter there is a classical contact discontinuity to separate the moving gas and static gas, and it cannot bear any pressure difference.  This is an example that displays the difficulty and fascination of studying the compressible Euler equations: {\it Similarly-looking problems may have drastic differences inside.}

\subsubsection{Problems 3}

Considering applications to rocket engineering, suppose now that $W_f$ is a wall of a two-dimensional nozzle, and $W_e$ is the exit of the nozzle, where the gas flows out (i.e., jet) is assumed to be uniform, pressureless and hyperbolic (see Figure \ref{fig3}):
\begin{equation}\label{1.10}
\begin{aligned}
\underline{U}=(\underline{\rho},\underline{u},\underline{v},\underline{E})^{\top} \qquad \text{on}\ \ W_e,
\end{aligned}
\end{equation}
where $\underline{\rho}>0,\ \underline{u}>0,\  \underline{E}>0$ and $\underline{v}$ are given constants.
As before, we may assume that there is a free layer to separate the limiting hypersonic flow above it and the pressureless jet below it.

{\bf Problem~3~~} Find a solution to the  problem {\eqref{1.4}}, \eqref{1.3}, \eqref{1.6}, \eqref{1.8}, \eqref{1.10}.

\begin{remark}
The steady pressureless Euler system is hyperbolic in the positive $x$-direction if and only if $u>0$, with $v/u$ being its eigenvalues, cf. \cite[(2.1) in p.325]{Cheng2011Delta}.
\end{remark}

\begin{figure}
\centering
\includegraphics[width=6in]{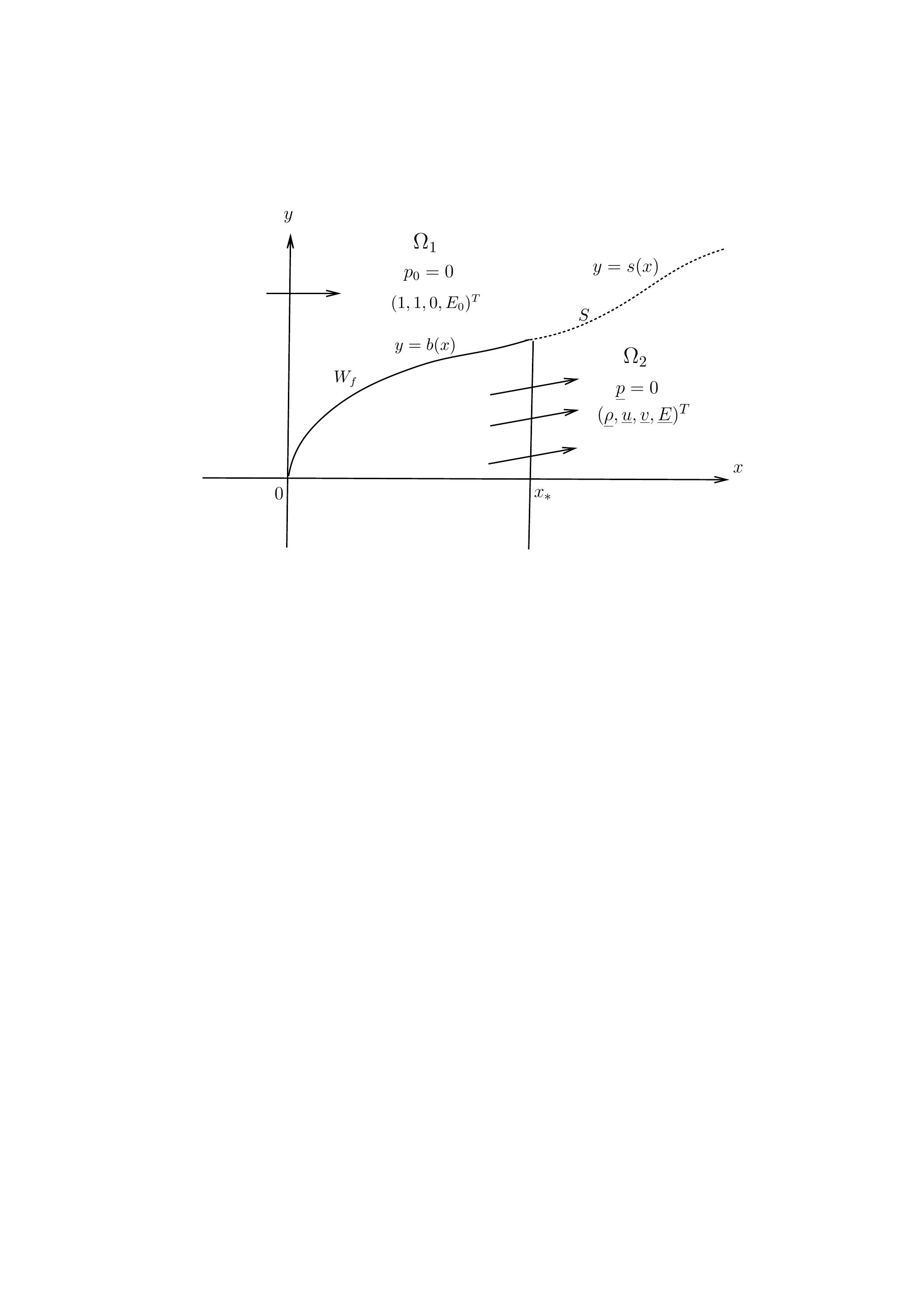}\\
\vspace{-12.5cm} \caption{Interactions of limiting hypersonic flows and pressureless jets.}\label{fig3}
\end{figure}

\subsection{Definition of Radon measure solutions}\label{sec12}
We now clarify the meaning of solutions to the above problems. As we know from physical observations, there appear concentration of mass along walls in these problems, and one needs Dirac measures supported on curves to describe such phenomena. So the key point is how to understand the Euler equations when some of the unknowns are measures.

We review some basic notions of measure theory. Let $\mathcal{B}$ be the Borel $\sigma$-algebra on $\Bbb R^{2}$, and $m$ a  Radon measure on $\mathcal{B}$.
As a Radon measure, $m$ can be considered as a bounded linear functional on the space $C_{0}(\Bbb R^{2})$ which consists  of continuous function with compact support:  for a test function $\phi \in C_{0}(\Bbb R^{2})$, one has
\begin{equation}\label{2.1}
\langle m,\phi\rangle=\int_{\Bbb R^{2}}{\phi(x,y)m(\dd x\dd y)}.
\end{equation}
For example, $w_{L}\delta_{L}$, a Dirac measure supported on a Lipschitz curve $L$ with weight $w_L$, is defined by
\begin{equation}\label{2.2}
\langle w_{L}\delta_{L},\phi\rangle=\int_{0}^{T}{w_{L}(t)\phi(x(t),y(t))
\sqrt{x'(t)^{2}+y'(t)^{2}}}\,\dd t,   \ \ \  \forall \phi\in C_{0}(\Bbb R^{2}).
\end{equation}	
Here $L=\{(x(t),y(t)):~t\in [0,T)\}$ is a Lipschitz curve given by the parameter $t$, and $w_{L}(t)\in L^{1}_{\mathrm{loc}}(0,T)$. It is singular to the Lebesgue measure  $\mathcal{L}^{2}$ on the plane.  The standard notation $\lambda\ll\mu$ means that a measure $\lambda$ is absolutely continuous with respect to a nonnegative measure $\mu$ (cf. \cite[p.50]{Evans2015}).

We now propose a rigorous definition of Radon measure solutions to Problem 1.

\begin{definition}\label{def2.2}
For fixed $\gamma\geq 1$,  let $m^{i},n^{i} \ (i=0,1,2,3),\wp$ be Radon measures on $\overline{\Omega}$, and $w_{p}$ a nonnegative function belong to  $L^{1}_{\mathrm{loc}}(\Bbb R^{+}\cup \{0\})$. We call $(\varrho,u,v,E)$ a (Radon) measure solution to Problem 1, provided that the following are valid:
\par i) For any $\phi\in C_{0}^{1}(\Bbb R^{2})$ (continuously differentiable functions with compact supports), there hold
\begin{equation}\label{2.3}
\langle m^{0},\partial_{x}\phi\rangle+\langle n^{0},\partial_{y}\phi\rangle+\int_{0}^{\infty}\rho_{0}u_{0}\phi(0,y)\,\dd y=0,	
\end{equation}	
\begin{equation}\label{2.4}
\begin{aligned}
&\langle m^{1},\partial_{x}\phi\rangle+\langle n^{1},\partial_{y}\phi\rangle+\langle\wp,\partial_{x}\phi\rangle+\langle w_{p}n_{1}\delta_{W},\phi\rangle\\&\qquad+\int_{0}^{\infty}(\rho_{0}u_{0}^{2}+p_{0})\phi(0,y)\,\dd y=0,
\end{aligned}
\end{equation}
\begin{equation}\label{2.5}
\begin{aligned}
&\langle m^{2},\partial_{x}\phi\rangle+\langle n^{2},\partial_{y}\phi\rangle+\langle\wp,\partial_{y}\phi\rangle+\langle w_{p}n_{2}\delta_{W},\phi\rangle\\&\qquad+\int_{0}^{\infty}(\rho_{0}u_{0}v_{0})\phi(0,y)\,\dd y=0,
\end{aligned}
\end{equation}
\begin{equation}\label{2.6}
\begin{aligned}
&\langle m^{3},\partial_{x}\phi\rangle+\langle n^{3},\partial_{y}\phi\rangle+\int_{0}^{\infty}(\rho_{0}u_{0}E_{0})\phi(0,y)\,\dd y=0,
\end{aligned}
\end{equation}	
where $\mathbf{n}=(n_{1}, n_{2})=(-b'(x),1)/\sqrt{1+b'(x)^{2}}$ is the inner unit normal vector on $W$ (pointing into $\Omega$);
\par ii) $\varrho$ is a nonnegative Radon measure,
such that $\wp\ll\varrho$, $(m^{0},n^{0})\ll\varrho$, $(m^{k},n^{k})\ll(m^{0},n^{0})\ (k=1,2,3)$,
and the corresponding Radon-Nikodym derivatives satisfy $\varrho-a.e.$ that
\begin{equation}\label{2.7}
\begin{aligned}
&u=\dfrac{m^{0}(\dd x\dd y)}{\varrho(\dd x \dd y)} \ \ \ \text{and} \ \ \   v=\dfrac{n^{0}(\dd x\dd y)}{\varrho(\dd x \dd y)},
\end{aligned}
\end{equation}
\begin{equation}\label{2.8}
\begin{aligned}
&u=\dfrac{m^{1}(\dd x\dd y)}{m^{0}(\dd x \dd y)}=\dfrac{n^{1}(\dd x \dd y)}{n^{0}(\dd x \dd y)}, \\  &v=\dfrac{m^{2}(dxdy)}{m^{0}(\dd x \dd y)}=\dfrac{n^{2}(\dd x \dd y)}{n^{0}(\dd x \dd y)},
\end{aligned}
\end{equation}
and there is a $\varrho$-a.e. function $E$, so that
\begin{equation}\label{2.9}
\begin{aligned}
&E=\dfrac{m^{3}(\dd x \dd y)}{m^{0}(\dd x \dd y)}=\dfrac{n^{3}(\dd x \dd y)}{n^{0}(\dd x \dd y)};
\end{aligned}
\end{equation}
on the null sets of $\varrho$, we set $u, v, E$ to be zero;
\par iii) If  $\varrho\ll\mathcal{L}^{2}$ in a set $A$, with $\rho$ being the Radon-Nikodym derivative, and $\wp\ll\mathcal{L}^{2}$ in $A$ with Radon-Nikodym derivative $p$, then
\eqref{1.3} is valid $\mathcal{L}^{2}$-a.e. in $A$, and the classical entropy conditions hold across discontinuities of the functions $U=(\rho, u,v, E)^\top$ in $A$.
\end{definition}

\begin{remark}\label{rm11} It is easy to check that integral weak solutions are measure solutions, {cf.} \cite{Qu2018Hypersonic}. In that paper, it is also shown that for a straight ramp, the well-known piecewise constant integral weak solution containing a shock converges weakly in the sense of measure to the corresponding measure solution with density containing a Dirac measure on the ramp, as the Mach number of the upcoming flow goes to infinity (or equivalently, $\gamma\to1$). Hence  consistency  holds for the above definition of measure solutions.
\end{remark}

\begin{remark}\label{rm12}
The basic idea of our definition of  measure solution is firstly to relax the Euler equations to a linear differential system of measures of fluxes of mass and momentum etc., and the nonlinearity of the Euler system is recovered from the algebraic relations of Radon-Nikodym derivatives. The state function \eqref{1.3} is no longer required when concentration occur. So there will not appear difficulty such as  products of Dirac measures. The definition makes sense for general multidimensional steady or unsteady compressible Euler equations.
\end{remark}

\begin{remark}\label{rm13}
$w_p \cdot(-\mathbf{n})$ is the force (lift/drag) acting on the ramp by the gas flow, hence it is a quantity received great attention from engineers. We require in this paper $w_p>0$  to guarantee that the mass concentrates actually on the walls.
\end{remark}

\begin{remark}\label{rm14}
We can define measure solutions to Problems 2 and 3 in a similar way; consult necessary modifications indicated in Sections \ref{sec3}-\ref{sec4}.
\end{remark}

\subsection{Main results and remarks} \label{sec13}
The following are the  main results we obtained for the above three problems.

\begin{theorem}\label{thm1.1}
For limiting hypersonic flow passing the infinite ramp $W$,
suppose that
\begin{equation}\label{120}
b'(x)\geq0,\quad  b''(x){H(x)}>-{b'(x)^{2}\sqrt{1+b'(x)^{2}}},
\end{equation}
where
\begin{equation} \label{121} H(x)\triangleq\int_{0}^{x}\dfrac{b'(t)}{\sqrt{1+b'(t)^{2}}}\,\dd t.
\end{equation}
Then Problem 1 has a measure solution given by \eqref{2.31}\eqref{2.30}\eqref{222}, with density containing a weighted Dirac measure supported on $W$. In particular, we have
\begin{equation}\label{122}
w_{p}(x)=\dfrac{b''(x)H(x)+b'(x)^{2}\sqrt{1+b'(x)^{2}}}{(1+b'(x)^{2})^{\frac{3}{2}}}.
\end{equation}

\end{theorem}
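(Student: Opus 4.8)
The plan is to build the solution explicitly from an ansatz that encodes the physics: since the limiting flow is pressureless, every particle travels horizontally from the inlet $\{x=0\}$ carrying the uniform state $U_0=(1,1,0,E_0)$ (so $p=0$) until it reaches the ramp, where it is captured and thereafter slides along $W$. Accordingly I would seek $\varrho$ whose absolutely continuous part is the Lebesgue measure with density $1$ on $\Omega$ (bulk state $(\rho,u,v,E)=(1,1,0,E_0)$) and whose singular part is a Dirac term $w_\varrho\,\delta_W$ recording the mass concentrated on the wall, with wall velocity $(u_W,v_W)$ obeying the slip relation $v_W=b'(x)u_W$ and wall energy $E_W$. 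Because $b$ is nondecreasing with $b(0)=0$, the horizontal segment from $(0,y)$ to $(x,y)$ stays in $\Omega$ whenever $y>b(x)$, so the uniform bulk state fills all of $\Omega$ with no vacuum. The fluxes $m^i,n^i,\wp$ are then forced by the Radon--Nikodym prescriptions \eqref{2.7}--\eqref{2.9}, leaving as genuine unknowns only the scalar profiles $w_\varrho(x),u_W(x),E_W(x)$ and the wall weight $w_p(x)$.

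The next step is to insert this ansatz into the four identities \eqref{2.3}--\eqref{2.6} and reduce each to an ordinary differential equation along $W$. For the absolutely continuous part I apply the divergence theorem on $\Omega$: the bulk flux being constant, its contribution is a boundary integral, and since $b'\ge0$ the horizontal flux pierces $W$ at the rate $b'(x)$ per unit $x$, which is exactly the mass (and momentum/energy) deposited on the wall. For the singular part I parametrize $W$ by $x$, use $v_W=b'u_W$ to collapse $\partial_x\phi$ and $\partial_y\phi$ into the total derivative $\tfrac{d}{dx}\phi(x,b(x))$, and integrate by parts; the inlet data \eqref{1.6} cancels the corresponding $\{x=0\}$ boundary terms. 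Writing $q=w_\varrho\sqrt{1+b'^2}\,u_W$ for the mass flux along $W$ and $P=w_\varrho\sqrt{1+b'^2}\,u_W^2$ for the momentum flux, the four equations become
\[
q'=b',\qquad P'=b'(1-w_p),\qquad (b'P)'=w_p,\qquad (qE_W)'=E_0\,b'.
\]

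It then remains to integrate this system, which is where the structure of \eqref{121}--\eqref{122} appears. The mass equation gives $q=b$ (using $q(0)=0$), the energy equation gives $E_W\equiv E_0$, and eliminating $w_p$ between the two momentum equations yields the linear equation $P'+\frac{b'b''}{1+b'^2}P=\frac{b'}{1+b'^2}$, whose integrating factor is \emph{exactly} $\sqrt{1+b'^2}$. Hence $(\sqrt{1+b'^2}\,P)'=b'/\sqrt{1+b'^2}$, so $\sqrt{1+b'^2}\,P=H(x)$, i.e. $P=H/\sqrt{1+b'^2}$. Substituting back through $w_p=(b''P+b'^2)/(1+b'^2)$ produces precisely \eqref{122}, while $u_W=P/q$ and $w_\varrho=q/(\sqrt{1+b'^2}\,u_W)$ give the remaining profiles \eqref{2.30}\eqref{2.31}.

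Finally I would settle admissibility. The hypothesis $b'\ge0$ keeps $q=b\ge0$ and $H\ge0$, so the captured mass and velocity remain nonnegative, and the second inequality in \eqref{120} is nothing but $b''H+b'^2\sqrt{1+b'^2}>0$, i.e. $w_p>0$, which by Remark \ref{rm13} is what makes the concentration on $W$ physically genuine. The step I expect to be most delicate is not the ODE integration but the verification that the constructed measures satisfy every clause of Definition \ref{def2.2}: checking the absolute-continuity chain $\wp\ll\varrho$, $(m^0,n^0)\ll\varrho$, $(m^k,n^k)\ll(m^0,n^0)$ and that the Radon--Nikodym derivatives reproduce $u,v,E$ on both the Lebesgue part and $W$, and in particular controlling the degenerate point $x=0$, where $q(0)=b(0)=0$ forces one to examine the limits of $u_W=H/(b\sqrt{1+b'^2})$ and $w_\varrho=b^2/H$ as $x\downarrow0$ to guarantee local integrability of the weights.
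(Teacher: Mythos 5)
Your proposal is correct and follows essentially the same route as the paper's Section~\ref{sec2}: the same bulk-plus-Dirac ansatz for the flux measures, substitution into the weak identities \eqref{2.3}--\eqref{2.6}, integration by parts along $W$, and reduction to the ODE system for the wall weights whose solution yields $w_m^1=H/(1+b'^2)$ and the Newton--Busemann formula \eqref{122}. Your variables $q=w_m^0\sqrt{1+b'^2}$ and $P=w_m^1\sqrt{1+b'^2}$ and the explicit integrating-factor step are just a streamlined rewriting of the paper's \eqref{2.13}, \eqref{2.19}, \eqref{2.23}, \eqref{2.24}.
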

\begin{remark}\label{rm15}
Formula \eqref{122} is the celebrated {\it Newton-Busemann pressure law.} As pointed out in \cite[p.133]{Hayes2004hypersonic}: ``This formula is valuable because it is easy to compute and gives a simple basis of comparison. " Adolf Busemann (1901--1986) was a German aerospace engineer who also discovered the benefits of the swept wing for aircraft at high speeds.

A derivation of the Newton-Busemann pressure law is given in \cite[Sections 3.3, 3.4]{Anderson2006hypersonic}, and the law is presented as formula (3.29) in \cite[p.67]{Anderson2006hypersonic}. The formula is derived by a lengthy physical argument (nearly four pages), taking into account the centrifugal force required for a particle to moving along the curved ramp. Hence in \cite[p.133]{Hayes2004hypersonic} Hayes and Probstein wrote ``It is not based on any rational theory, however, and its empirical basis should be kept in mind." We believe a significant contribution of this paper is that it establishes a rigorous mathematical foundation for the  Newton-Busemann pressure law, as the law can now be proved by short and straightforward computations from the very fundamental compressible Euler equations. This makes the Newton theory a part of modern rational mechanics.

To show equivalence of \eqref{122} with (3.29) in  \cite[p.67]{Anderson2006hypersonic}, note that $b'(x)=\tan\theta$ for $\theta$ appeared in (3.29) in  \cite[p.67]{Anderson2006hypersonic}, and then rewrite the integration in \eqref{122} to be integrated for the variable $y$, using $\dd y=b'(x)\,\dd x$. The appearance of the factor $2$ in (3.29) in  \cite[p.67]{Anderson2006hypersonic} is that Anderson employed the scaling (3.16) in \cite[p.61]{Anderson2006hypersonic} to define the pressure coefficients $C_p$, where there is a $\frac12$ in the denominator; while in our work we just used the scaling $\tilde{p}=\frac{p}{\rho_\infty u_\infty^2}$ without the factor $\frac12$ in the denominator (see the scalings below (2.6) in \cite{Qu2018Hypersonic}).
\end{remark}

\begin{remark}
Obviously, \eqref{120} is sufficient to guarantee that $w_p>0,$  which means the ramp suffers force from the flow at each point. Such nontrivial ramp exits. For example, taking $b(x)=\sqrt{x}$, direct computation shows that  \eqref{120} holds for all $ x\in[0, \infty)$.
\end{remark}

\begin{theorem}\label{thm1.2}
For limiting hypersonic flow passing a finite ramp $W_{f}$, suppose \eqref{120} is valid for $0\leq x\le x_{*}$ and $H(x_*)>0$. Then for $\underline{p}\in[0,1]$, Problem 2 has a global measure solution (see \eqref{3.14}\eqref{3.15}) defined for all $x\ge0$, with density containing a weighted Dirac measure supported on a curve which coincides with $W_f$ for $0\leq x\le x_{*}$.  The curve for $x>x_*$, called ``free layer", separates limiting hypersonic flow above it from the static gas below it.  The shape of the free layer depends on the pressure of the static gas ({\it cf.} Figure \ref{fig5} and Figure \ref{fig8}):

\par  1) If $\underline{p}=0$, the free layer (see \eqref{3.31}) is at most of the order $\sqrt{x}$  as $x\to\infty$;

\par 2) If $0<\underline{p}<1$ , the free layer (see \eqref{3.18}) is of the order $\sqrt{\dfrac{\underline{p}}{1-\underline{p}}}x$ as $x\to\infty$;

\par 3) If $\underline{p}=1$, the free layer (see \eqref{3.32}) is of the order  $x^{2}$ as $x\to\infty$.

If $\underline{p}>1$,  there is a finite point $x_{\vartriangle}>x_*$, and Problem 2 has a local measure solution (see \eqref{3.14}\eqref{3.15}) with the above structure defined only on $x\in[0, x_{\vartriangle}]$. The solution blows up at $x=x_{\vartriangle}$, in the sense that the free layer (see \eqref{331}) rolls up at $x=x_{\vartriangle}$  and cannot be prolonged.
\end{theorem}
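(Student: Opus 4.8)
The plan is to construct the solution from three pieces and to reduce the free-layer conditions to a single scalar ODE. Above the concentration curve I take the unperturbed pressureless hypersonic stream $U\equiv(1,1,0,E_0)$ (so $p\equiv0$); below it, the prescribed static gas $\underline{U}=(\underline{\rho},0,0,\underline{E})$ with pressure $\underline{p}$; and on the curve a weighted Dirac mass in $\varrho$ transported with the slip relation \eqref{1.9}. Substituting this ansatz into the (suitably adapted) measure-solution identities \eqref{2.3}--\eqref{2.6} and localizing to a control strip $[x,x+\dd x]$ lying above the curve converts the balance of mass, of $x$- and $y$-momentum, and of energy into ordinary differential equations for the line quantities carried along the curve. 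Writing $\mathcal{M}$ for the mass current and $\Pi_x=\mathcal{M}u$, $\Pi_y=\mathcal{M}v=s'\Pi_x$ for the momentum currents, the incoming horizontal stream (unit mass flux per unit height) deposits $s'\,\dd x$ of mass carrying $x$-momentum $s'\,\dd x$ as the layer rises by $\dd y=s'\,\dd x$, while the only transverse force is the static pressure $\underline{p}$ through the lower face (contributing $\underline{p}\,\dd x$ in $y$ and $-\underline{p}s'\,\dd x$ in $x$). This gives, for $x>x_*$,
\[
\mathcal{M}=s,\qquad \Pi_x'=(1-\underline{p})\,s',\qquad \Pi_y'=\underline{p},\qquad (\mathcal{M}E)'=E_0\,s',
\]
so in particular $E\equiv E_0$ on the layer.

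On $0\le x\le x_*$ the curve is the fixed ramp $y=b(x)$, and the same localization with the solid-ramp reaction force in place of $\underline{p}$ reproduces the computation behind Theorem \ref{thm1.1}, whose solution is $\Pi_x(x)=H(x)/\sqrt{1+b'(x)^2}$. Continuity of the concentrated particles (same mass current and velocity direction through the cliff) then supplies the matching data
\[
s(x_*)=b(x_*),\qquad s'(x_*)=b'(x_*),\qquad \Pi_x(x_*)=\frac{H(x_*)}{\sqrt{1+b'(x_*)^2}}>0,
\]
the last inequality using $H(x_*)>0$. Integrating the two momentum equations from $x_*$ and dividing, via $s'=\Pi_y/\Pi_x$, yields the free-layer equation
\[
s'(x)=\frac{b'(x_*)\,\Pi_x(x_*)+\underline{p}\,(x-x_*)}{\Pi_x(x_*)+(1-\underline{p})\bigl(s(x)-b(x_*)\bigr)},\qquad s(x_*)=b(x_*).
\]

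The proof then splits according to $\underline{p}$. For $\underline{p}=0$ and $\underline{p}=1$ the equation is separable and integrates in closed form, giving free layers of order $\sqrt{x}$ (formula \eqref{3.31}) and $x^{2}$ (formula \eqref{3.32}), respectively. For $0<\underline{p}<1$ the denominator stays $\ge\Pi_x(x_*)>0$, so $s'$ remains finite and the solution is global; balancing the two terms linear in $x$ forces $s'(x)\to\sqrt{\underline{p}/(1-\underline{p})}$, the linear growth \eqref{3.18} of case 2. When $\underline{p}>1$ the coefficient $1-\underline{p}$ is negative, so the denominator $\Pi_x=\mathcal{M}u$ decreases strictly and hits $0$ at the finite value $s=b(x_*)+\Pi_x(x_*)/(\underline{p}-1)$; since the numerator remains positive there, $\dd x/\dd s\to0$, so this value is reached at a finite $x_\vartriangle$ where $u\to0^+$ and $s'\to+\infty$. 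This is exactly the ``rolling up'' \eqref{331} of the free layer, past which no solution of the assumed structure exists.

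The main obstacle is twofold. First, one must justify that Definition \ref{def2.2}, applied across the free layer, genuinely reduces to the displayed balance laws: this means identifying the concentrated parts of $m^i,n^i,\wp$, checking the Radon--Nikodym relations \eqref{2.7}--\eqref{2.9} for the constructed measure, and verifying that the transverse balance reproduces the correct static-pressure force together with the slip condition \eqref{1.9}. Second, and more delicate, is the breakdown statement for $\underline{p}>1$: one must prove $x_\vartriangle<\infty$ (the integrable-singularity estimate for $\dd x/\dd s$ above) and that the singularity is genuine, i.e. the layer cannot be continued as a graph and the tangential speed $u$ degenerates, so that hyperbolicity of the upper stream fails at $x_\vartriangle$. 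Once these points are settled, the case-by-case growth rates are routine integrations and asymptotic estimates.
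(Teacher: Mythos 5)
Your proposal is correct and follows essentially the same route as the paper: the same Dirac-measure ansatz on $W_f\cup S$, reduction via the weak formulation to line balance laws (your $\mathcal{M},\Pi_x,\Pi_y$ are exactly the paper's $\widetilde{w_{m}^{i}}(x)\sqrt{1+s'(x)^{2}}$), closure by the slip condition \eqref{1.9} to the scalar ODE \eqref{3.16}, and the same case analysis in $\underline{p}$. The only real difference is cosmetic: for $\underline{p}>1$ the paper integrates the parametrized system explicitly to the ellipse \eqref{331} and reads off the roll-up point $(x_{\vartriangle},y_{\vartriangle})$ from \eqref{3.28}--\eqref{3.30}, whereas you argue the finite-distance degeneracy $u\to0^{+}$, $s'\to\infty$ qualitatively from monotonicity of $\Pi_x$; both rest on the same conserved quadratic \eqref{3.17}.
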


\begin{remark}\label{rm18}
According to \cite[p.144]{Hayes2004hypersonic}, the concept of free layer was introduced by Busemann in 1933, to indicate a shock layer the pressure behind which is zero. We use free layer in this paper in a more general sense, which is also called {\it delta shock} in literature of mathematics (see, for example,  \cite{Chen2003Formation, Cheng2011Delta, shensun2009, Tan1994Delta}). There were many discussions and conjectures on free layers in \cite[Section 3.3]{Hayes2004hypersonic}. The merit of our approach is that we can calculate explicitly the expressions of various free layers based on rigorous mathematics. An application of designing afterbody that bears no force in limiting hypersonic flow is discussed in Remark \ref{rm33}.
\end{remark}

\begin{remark}\label{rm19}
To our knowledge, the last conclusion in the above theorem presents  the first example of blowing up of measure solutions (delta shocks) for the  Euler equations.
\end{remark}

\begin{theorem}\label{thm1.3}
Under the same assumptions on $b(x)$ as in Theorem \ref{thm1.2}, for Problem  3, i.e., interactions of limiting hypersonic flows and pressureless jets, we have the following results:

1) If $\underline{v}/\underline{u}\ge b'(x_*)$, then Problem 3 has a measure solution (see \eqref{4.16}-\eqref{4.17}), containing a free layer (see \eqref{422} and \eqref{4.19}) separating the limiting hypersonic flow and pressureless jet, which is of the order $\displaystyle \frac{\sqrt{\underline{\rho}}\underline{v}}{1+\sqrt{\underline{\rho}}\underline{u}}x$ as $x\to\infty$.

2) If $\underline{v}/\underline{u}<b'(x_*),$  then Problem 3 has a global measure solution (see \eqref{443}\eqref{444}) containing vacuum. The vacuum starts at $(x_*,b(x_*))$, and is bounded by a free layer (see \eqref{4.39}) and a straight contact discontinuity (see \eqref{4.42}). Furthermore,  if $\underline{v}\leq0$, the vacuum is unbounded. If $\underline{v}>0$, the vacuum is bounded.
\end{theorem}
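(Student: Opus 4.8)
The plan is to construct the solution explicitly by fixing its global structure and reducing the unknown free layer $S=\{y=s(x)\}$ to an ordinary differential system. In both cases the measure solution has three ingredients: above $S$ the flow is the undisturbed upstream stream $(\rho,u,v)=(1,1,0)$, $p=0$ (exactly as in Problem 1, each horizontal particle meets $S$ only where $s(x)$ equals its height, so the region above $S$ carries precisely the incoming state $U_0$); below $S$ the flow is the uniform pressureless jet $\underline{U}$, which, being pressureless and uniform on $W_e$, travels along straight parallel particle paths; and on $S$ itself lies a weighted Dirac mass, the free layer, which carries the mass swept from the two adjacent streams and moves tangentially, so that the slip relation \eqref{1.9} holds for the concentrated velocity $(U_s,V_s)$, $V_s=s'(x)U_s$. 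The data on $S$ at $x=x_*$ are inherited by continuity from the ramp part of the solution, which coincides with that of Theorem \ref{thm1.2} for $0\le x\le x_*$; the whole construction turns on the singular Riemann problem at the corner $(x_*,b(x_*))$ where the concentrated layer, the hypersonic stream and the jet collide, and the sign of $\underline{v}/\underline{u}-b'(x_*)$ decides whether the jet can support the layer from below (Case 1) or peels away and opens a vacuum (Case 2).

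First I would derive the evolution equations for $S$. Writing $F_i,G_i$ for the components of the flux vectors in \eqref{1.4} and $\Phi_i(x)$ for the $x$-flux of the $i$-th conserved quantity carried by the Dirac layer, one tests the weak formulation (the Problem-3 analogue of \eqref{2.3}--\eqref{2.6}, cf. Remark \ref{rm14}) against functions localized near $S$. Integrating the two constant Lebesgue states by parts against the curve and integrating the concentrated flux by parts in $x$ yields the generalized Rankine--Hugoniot relations
\begin{equation*}
\frac{\dd\Phi_i}{\dd x}=s'(x)\,[F_i]-[G_i],\qquad i=0,1,2,3,
\end{equation*}
where $[\,\cdot\,]=(\text{above})-(\text{below})$ is the jump from the jet to the hypersonic stream, and where the pressureless, tangential character of the layer forces $\Phi_1=\Phi_0U_s$, $\Phi_2=s'\Phi_1$, $\Phi_3=\Phi_0E_s$. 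Inserting the explicit states $(1,1,0,E_0)$ and $\underline{U}$, the relations $i=0,1,2$ together with $\Phi_2=s'\Phi_1$ collapse to one second-order ODE for $s$ coupled to a first-order equation for the mass flux $\Phi_0$; these are the equations whose integrals are recorded in \eqref{4.16}--\eqref{4.17}, \eqref{422}, \eqref{4.19}.

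For Case 1 ($\underline{v}/\underline{u}\ge b'(x_*)$) I would solve this system from the inherited data $s(x_*)=b(x_*)$, $s'(x_*)=b'(x_*)$ and analyze $x\to\infty$. Seeking a straight asymptote $s'(x)\to\lambda$ (hence $s''\to0$) reduces the $y$-momentum balance to
\begin{equation*}
\lambda^2(1-\underline{\rho}\,\underline{u}^2)+2\lambda\,\underline{\rho}\,\underline{u}\,\underline{v}-\underline{\rho}\,\underline{v}^2=0,
\end{equation*}
whose discriminant collapses to the perfect square $4\underline{\rho}\,\underline{v}^2$ and whose admissible root is exactly $\lambda=\sqrt{\underline{\rho}}\,\underline{v}/(1+\sqrt{\underline{\rho}}\,\underline{u})$, the claimed linear order. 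The remaining task is to show the initial value problem admits a global solution with $\Phi_0>0$ and with $s'$ trapped below $\underline{v}/\underline{u}$ for all $x$ (so that the jet keeps pushing $S$ from below, $S$ genuinely separates the two streams, and neither the concentration nor the geometry degenerates). This monotone approach to the asymptote, in contrast with the finite-distance roll-up of Theorem \ref{thm1.2}, is what makes the Case-1 solution global.

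For Case 2 ($\underline{v}/\underline{u}<b'(x_*)$) the jet peels off the layer at the corner and a vacuum opens. Its lower boundary is the free edge of a uniform pressureless gas, hence the straight contact discontinuity $y=b(x_*)+(\underline{v}/\underline{u})(x-x_*)$; its upper boundary is the layer now facing vacuum, so the same jump relations with the lower state $\rho_-=0$ give $\dd\Phi_2/\dd x=0$ and reduce to $s''=-(s')^3/\Phi_2$, whence $s$ grows of order $\sqrt{x}$ (the same balance as the $\underline{p}=0$ free layer of Theorem \ref{thm1.2}). Since $s$ leaves $(x_*,b(x_*))$ with slope $b'(x_*)$ and then flattens like $\sqrt{x}$ while the contact line rises linearly with slope $\underline{v}/\underline{u}$, the two boundaries meet at a finite $x$ exactly when $\underline{v}>0$ (bounded vacuum) and never meet when $\underline{v}\le0$ (unbounded vacuum); in the bounded case I would continue the solution past the closure point by the Case-1 dynamics. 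The main obstacle throughout is precisely the first step---extracting the correct free-layer jump conditions from the measure definition when two distinct pressureless streams abut the Dirac layer---together with establishing global solvability and the stated consistency ($\Phi_0>0$, correct ordering of the boundaries) of the resulting nonlinear ODE system.
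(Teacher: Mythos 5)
Your proposal follows essentially the same route as the paper: the same two-state-plus-Dirac-layer ansatz with data inherited from the ramp at $x=x_*$, the same generalized Rankine--Hugoniot relations extracted from the measure formulation together with the slip condition $\Phi_2=s'\Phi_1$ reducing everything to an explicitly integrable ODE for $s$, the same case split governed by the entropy condition \eqref{45} at the corner, the same vacuum structure in Case 2 (a free layer of order $\sqrt{x}$ over the straight contact discontinuity \eqref{4.42}), and the same absorption of the contact discontinuity by the free layer at the finite collision point when $\underline{v}>0$. The only notable variation is that you obtain the asymptotic slope $\sqrt{\underline{\rho}}\,\underline{v}/(1+\sqrt{\underline{\rho}}\,\underline{u})$ from the quadratic for the limiting slope $\lambda$ (whose discriminant is indeed the perfect square $4\underline{\rho}\,\underline{v}^2$) rather than by reading it off the closed-form solution \eqref{4.19}, and the verifications you explicitly defer --- global solvability, $0\le s'\le\underline{v}/\underline{u}$, and positivity of the mass flux $d(x)$ of \eqref{423} --- are precisely the computations the paper carries out from its explicit quadratic formula for $s(x)$, so they are filled in by elementary algebra rather than requiring any new idea.
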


\begin{remark}
To exclude the possibility that particles escape from the free layer and hence leads to obvious non-uniqueness of measure solutions, we used here the well-recognized  entropy condition of delta shocks \eqref{45}. The free layers obtained for item 1) in the above theorem satisfy this entropy condition.
\end{remark}

\begin{remark}
To prove item 2) in the theorem, we encounter a problem of colliding of free layer and contact discontinuity, and it is reduced to the case studied in 1).
\end{remark}

We will prove Theorems \ref{thm1.1}, \ref{thm1.2} and \ref{thm1.3}  in Sections \ref{sec2}-\ref{sec4} respectively. In a short Section \ref{sec5}, we focus on the role played by singular Riemann problems in the studies of general measure solutions.  Here by singular Riemann problem we mean the initial data is piecewise constant, with density containing  a Dirac measure supported on the initial discontinuity point.

Finally we review briefly some mathematical studies on Radon measure solutions of hyperbolic equations. For scalar conservation laws, Liu and Pierre \cite{LP1984} had already found regularizing effects of genuinely nonlinear fluxes --- although the initial data could contain Dirac measures, the solutions are always  functions. Demengel and Serre \cite{DS1991} studied well-posedness of Cauchy problems of scalar conservation laws with general convex fluxes that grow linearly at infinity, and the initial data being non-negative measures. The main tools are Lax-Oleinik formula and theory of Hamilton-Jacobi equations. See \cite{BSTT2020} for recent developments. Dal Maso, LeFloch and Murat \cite{DL1995} introduced a product of a measure and certain discontinuous function, and used it to define and study  measure solutions to some $2\times 2$ hyperbolic system \cite{L1990, HL1996}. There are also many works studying measure solutions using various flux approximation/regularization, such as vanishing viscosity \cite{shengzhang1999, Tan1994Delta} or vanishing pressure \cite{Chen2003Formation,CHEN2004Concentration}. Huang and Wang established  well-posedness in the class of Radon measures for Cauchy problem of the one-dimensional pressureless Euler equations \cite{Huang2001Well}. See \cite{Cavalletti2019A} for recent progress on the multidimensional case. We recommend \cite{YZ} for a rather complete survey of mathematical studies of delta shocks.

Comparing to these established works, the merit of our approach is that our definition of Radon measure solution is rather elementary and flexible, applicable to a large extent of problems, and we can prove from it naturally some physically well-known formulas. However, since the definition  employed the special structure of compressible Euler equations,  we do not know presently how to extend it to general hyperbolic systems of conservation laws.

\section {Limiting hypersonic flow passing an infinite ramp and Newton-Busemann pressure law} \label{sec2}

In this section we prove Theorem \ref{thm1.1} by constructing a measure solution to Problem 1.

Let $\mathsf{I}_{A}$ be the characteristic function of a set $A\subset\mathbb{R}^2$, namely $\mathsf{I}_{A}(x,y)=1$ if $(x,y)\in A$ and $\mathsf{I}_{A}(x,y)=0$ otherwise. Recall that $\LL^2$ is the standard Lebesgue measure on the plane $\mathbb{R}^2$. Now suppose the measures of fluxes are given by
\begin{eqnarray}
&&\begin{aligned}
      &m^{0}=\rho_{0}u_{0}\mathsf{I}_{\Omega}\mathcal{L}^{2}+w_{m}^{0}(x)\delta_{W} =\mathsf{I}_{\Omega}\mathcal{L}^{2}+w_{m}^{0}(x)\delta_{W},\\ &n^{0}=\rho_{0}v_{0}\mathsf{I}_{\Omega}\mathcal{L}^{2}+w_{n}^{0}(x)\delta_{W}=w_{n}^{0}(x)\delta_{W};
  \end{aligned}\label{2.10}\\
&&m^{1}=\mathsf{I}_{\Omega}\mathcal{L}^{2}+w_{m}^{1}(x)\delta_{W}, \quad n^{1}=w_{n}^{1}(x)\delta_{W},  \ \  \wp=0;\label{2.06}\\
&&m^{2}=w_{m}^{2}(x)\delta_{W},  \ \ \ \  n^{2}=w_{n}^{2}(x)\delta_{W};\label{2.20}\\
&&m^{3}=E_{0}\mathsf{I}_{\Omega}\mathcal{L}^{2}+w_{m}^{3}(x)\delta_{W}, \quad n^{3}=w_{n}^{3}(x)\delta_{W},\label{2.05}
\end{eqnarray}
where $w_{m}^{i}(x),w_{n}^{i}(x)\ (i=0,1,2,3)$ are functions to be determined.

\medskip

Substituting \eqref{2.10} into \eqref{2.3}, it follows
\begin{equation}\label{2.11}
\begin{aligned}
\int_{\Omega}{\partial_{x}\phi(x,y)} \,\dd x \dd y &+\int_{0}^{\infty}w_{m}^{0}(x)\partial_{x}\phi(x,b(x))\sqrt{1+b'(x)^{2}}\,\dd x\\
&+\int_{0}^{\infty}w_{n}^{0}(x)\partial_{y}\phi(x,b(x))\sqrt{1+b'(x)^{2}}\,\dd x+\int_{0}^{\infty}\phi(0,y)\,\dd y=0.
\end{aligned}
\end{equation}
Observing that
\begin{equation*}
  \begin{aligned}
     &\int_{0}^{\infty}w_{m}^{0}(x)\partial_{x}\phi(x,b(x))\sqrt{1+b'(x)^{2}}\,\dd x \\ =&-{w_m^0(0)}\sqrt{1+{b'(0)^2}}\phi(0,0)-\int_{0}^{\infty} b'(x)w_{m}^{0}(x)\sqrt{1+b'(x)^{2}}\partial_{y}\phi(x,b(x))\,\dd x \\ &-\int_{0}^{\infty}\dfrac{\dd(w_{m}^{0}(x)\sqrt{1+b'(x)^{2}})}{\dd x}\phi(x,b(x))\,\dd x,
  \end{aligned}
\end{equation*}
we have
\begin{equation}\label{2.12}
 \begin{aligned}
 &w_{m}^{0}(0)\sqrt{1+b'(0)^{2}}\phi(0,0)-\int_{0}^{\infty}b'(x)\phi(x,b(x))\,\dd x +\int_{0}^{\infty}\dfrac{\dd(w_{m}^{0}(x)\sqrt{1+b'(x)^{2}})}{\dd x}\phi(x,b(x))\,\dd x\\
 &-\int_{0}^{\infty}(w_{n}^{0}(x) -b'(x)w_{m}^{0}(x))\sqrt{1+b'(x)^{2}}\partial_{y}\phi(x,b(x))\,\dd x=0.
 \end{aligned}
 \end{equation}
By arbitrariness of $\phi$, the above implies
 \begin{equation}\label{2.13}
 \begin{aligned}
 &w_{m}^{0}(0)=0, \ \
 \dfrac{\dd(w_{m}^{0}(x)\sqrt{1+b'(x)^{2}})}{\dd x}=b'(x), \  \ w_{n}^{0}(x)=b'(x)w_{m}^{0}(x),
 \end{aligned}
 \end{equation}
and we solve
   \begin{equation}\label{2.14}
  \begin{aligned}
  &w_{m}^{0}(x)=\dfrac{b(x)}{\sqrt{1+b'(x)^{2}}}, \ \ \  w_{n}^{0}(x)=\dfrac{b'(x)b(x)}{\sqrt{1+b'(x)^{2}}}.
  \end{aligned}
  \end{equation}
We may get
\begin{equation}\label{2.15}
w_{m}^{3}(x)=\dfrac{E_{0}b(x)}{\sqrt{1+b'(x)^{2}}}, \quad w_n^{3}(x)=\dfrac{E_{0}b'(x)b(x)}{\sqrt{1+b'(x)^{2}}}
\end{equation}
in the same way. \footnote{Actually for this problem there is a freedom to determine the value of $E$ in these two weights. We choose here $E_0$ on $W$, because for supersonic flows without concentration, it is well-known that the quantity $E$ is always constant along flow trajectories. So for this problem we also take it as constant in the whole flow field due to our uniform initial data.}

Similarly substituting \eqref{2.06} into \eqref{2.4}, we have
 \begin{equation}\label{2.17}
\begin{aligned}
&\int_{0}^{\infty}[{(1-w_{p}(x))}b'(x)-\dfrac{\dd(w_{m}^{1}(x)\sqrt{1+b'(x)^{2}})}{\dd x}
]\phi(x,b(x))\,\dd x-w_{m}^{1}(0)\sqrt{1+b'(0)^{2}}\phi(0,0)\\&+\int_{0}^{\infty}(w_{n}^{1}(x)-b'(x)w_{m}^{1}(x))\sqrt{1+b'(x)^{2}}\partial_{y}\phi(x,b(x))\,\dd x=0,
\end{aligned}
\end{equation}
which yields
 \begin{eqnarray}\label{2.18}
&&w_{m}^{1}(0)=0, \ \  w_{n}^{1}(0)=0, \ \  w_{n}^{1}(x)=b'(x)w_{m}^{1}(x),\\
&&
\dfrac{\dd(w_{m}^{1}(x)\sqrt{1+b'(x)^{2}})}{\dd x}={(1-w_{p}(x))}b'(x),\qquad x\ge0.\label{2.19}
\end{eqnarray}
Note the function $w_p(x)$ shall be solved.

Thanks to \eqref{2.20}, and noticing that $v_{0}=0$,  \eqref{2.5} becomes
 \begin{equation}
\begin{aligned}
&-w_{m}^{2}(0)\sqrt{1+b'(0)^{2}}\phi(0,0)+\int_{0}^{\infty}[w_{p}(x)-
\dfrac{\dd(w_{m}^{2}(x)\sqrt{1+b'(x)^{2}})}{\dd x}]
\phi(x,b(x))\dd x\\
&+\int_{0}^{\infty}(w_{n}^{2}(x) -b'(x)w_{m}^{2}(x))\sqrt{1+b'(x)^{2}}\partial_{y}\phi(x,b(x))\dd x=0,
\end{aligned}
\end{equation}
consequently
\begin{eqnarray}
&&w_{m}^{2}(0)=0, \ \  w_{n}^{2}(x)=b'(x)w_{m}^{2}(x),\label{2.22}\\
&&\dfrac{\dd(w_{m}^{2}(x)\sqrt{1+b'(x)^{2}})}{\dd x}={w_{p}(x)},\quad x\ge0.\label{2.23}
\end{eqnarray}

By requirements \eqref{1.5} and \eqref{2.8}, there holds
\begin{equation}\label{2.24}
\begin{aligned}
&w_{m}^{2}(x)=b'(x)w_{m}^{1}(x).
\end{aligned}
\end{equation}
Then we solve from \eqref{2.19}, \eqref{2.23} and \eqref{2.24} that
\begin{align}\label{2.25}
&w_{m}^{1}(x)=\dfrac{H(x)}{1+b'(x)^{2}},\\
&w_{p}(x)=\dfrac{b''(x)H(x)+b'(x)^{2}\sqrt{1+b'(x)^{2}}}{(1+b'(x)^{2})^{\frac{3}{2}}},\label{2.26}
\end{align}
where
\begin{equation}\label{219}
H(x)\triangleq\int_{0}^{x}\dfrac{b'(t)}{\sqrt{1+b'(t)^{2}}}\,\dd t.\end{equation}
We thus proved the Newton-Busemann pressure law \eqref{122}.

To write down a measure solution, applying \eqref{2.8}, one has
\begin{equation}
\begin{aligned}
&u|_{W}=\dfrac{H(x)}{b(x)\sqrt{1+b'(x)^{2}}}, \ \ \  v|_{W}=\dfrac{b'(x)H(x)}{b(x)\sqrt{1+b'(x)^{2}}},
\end{aligned}
\end{equation}
hence
\begin{equation}\label{2.30}
\begin{aligned}
&u=\mathsf{I}_{\Omega}+\dfrac{H(x)}{b(x)\sqrt{1+b'(x)^{2}}}\mathsf{I}_{W}, \ \ \  v=\dfrac{b'(x)H(x)}{b(x)\sqrt{1+b'(x)^{2}}}\mathsf{I}_{W}.
\end{aligned}
\end{equation}
By \eqref{2.7}, one gets the measure of density:
\begin{equation}\label{2.31}
\begin{aligned}
&\varrho=\mathsf{I}_{\Omega}\mathcal{L}^{2}+\dfrac{(b(x))^{2}}{H(x)}\mathsf{\delta}_{W}.
\end{aligned}
\end{equation}
Furthermore, recalling that \eqref{2.9}, \eqref{2.14} and \eqref{2.15}, then
\begin{equation}\label{222}
E=E_{0}\mathsf{I}_{\Omega}+E_{0}\mathsf{I}_{W}.
\end{equation}
So \eqref{2.30}, \eqref{2.31} and \eqref{222} is a measure solution to Problem 1.
This completes the proof of Theorem \ref{thm1.1}.

\begin{remark}\label{rm21}
The reason why we require $w_p>0$ in Theorem \ref{thm1.1} is to guarantee the assumption lying in \eqref{2.10}-\eqref{2.05}, namely concentration of mass appears just on the surface of the ramp. If at some point $w_p=0$, then the particles in the shock layer may not feel the ramp and then fly away from the ramp, hence a free layer appears. The region between the free layer and the ramp is vacuum, or with zero pressure, a situation that we will discuss in the next section.
\end{remark}

\begin{remark}\label{rm22}
One may wonder if there is a ramp that bears uniform force $p$ per unit area from the upcoming hypersonic limit flow. This means the function  $b(x)$ solves the nonlocal ordinary differential equations
\begin{equation}
\label{225}b''(x)H(x)+{b'(x)^{2}\sqrt{1+b'(x)^{2}}}=p(1+b'(x)^{2})^{3/2},
\end{equation}
where $H(x)$ is defined by \eqref{219}. Some computation reduces this equation to $H''H+(H')^2=p$, and by $H(0)=0$  we solve that $b(x)=\sqrt{\frac{p}{1-p}}x$, a case studied in \cite{Qu2018Hypersonic}. So there is no nontrivial ramp that bears no force in limiting hypersonic flow. It would be interesting to compare  this result with Remark \ref{rm33} in the following section.
\end{remark}

\section {Limiting hypersonic flow passing a finite ramp and interactions with static gas} \label{sec3}

In this section we  prove Theorem \ref{thm1.2}. To define a measure solution, recall that
the domain { we consider} now is
$$\widetilde{\Omega}\triangleq\Omega_{1}\cup\Omega_{2},$$
where
$$\Omega_{1}\triangleq\{(x,y)\in \Bbb R^{2}:~0<x\leq x_{*},\ y>b(x);\  \it x>x_{*},y>s(x)\}$$
and
$$\Omega_{2}\triangleq\{(x,y)\in \Bbb R^{2}:~ x>x_{*},\ y<s(x)\}
$$
represent respectively the region occupied by the limiting hypersonic flow above the free layer $S$ and the region behind the ramp and below $S$, see Figure \ref{fig2}.
Comparing to Problem 1, the solid boundary now is $\widehat{W}=W_f\cup W_e$. Therefore, for a definition of measure solutions of Problem 2, we just replace the boundary $W$ appeared in \eqref{2.4}  and \eqref{2.5} in Definition \ref{def2.2} by $\widehat{W}$, with
$\mathbf{n}=(n_{1}, n_{2})={(-b'(x), 1)/\sqrt{1+b'(x)^{2}}}$ being inner unit normal vector on
$W_{f}$ (pointing into $\Omega_{1}$) and
$\mathbf{n}=(n_{1}, n_{2})={(1,0)}$ on $W_{e}$, pointing into $\Omega_{2}$.

It turns out that there is a measure solution to Problem 2, which is  piecewise constant, connected by a free layer $S$. We firstly construct such a solution and then study the dependence of the shape of $S$ on the pressure $\underline{p}$ of the static gas.

\subsection{Construction of piecewise constant measure solutions}\label{sec31}

Set
$\widetilde{W}=W_{f}\cup S$, and
\begin{eqnarray}
&&
\begin{aligned}
&m^{0}=\mathsf{I}_{\Omega_{1}}\mathcal{L}^{2}+w_{m}^{0}(x)\delta_{\widetilde{W}}
=\mathsf{I}_{\Omega_{1}}\mathcal{L}^{2}+w_{m}^{0}(x)\delta_{W_{f}} +\widetilde{w_{m}^{0}}(x)\delta_{S},\\  &n^{0}=w_{n}^{0}(x)\delta_{\widetilde{W}}=w_{n}^{0}(x)\delta_{W_{f}}+\widetilde{w_{n}^{0}}(x)\delta_{S}, \end{aligned}\label{3.1}\\
&&
\begin{aligned}
&m^{1}=\mathsf{I}_{\Omega_{1}}\mathcal{L}^{2}+w_{m}^{1}(x)\delta_{\widetilde{W}}
=\mathsf{I}_{\Omega_{1}}\mathcal{L}^{2}+w_{m}^{1}(x)\delta_{W_{f}} +\widetilde{w_{m}^{1}}(x)\delta_{S},\\  &n^{1}=w_{n}^{1}(x)\delta_{\widetilde{W}}=w_{n}^{1}(x)\delta_{W_{f}} +\widetilde{w_{n}^{1}}(x)\delta_{S},
  \quad \wp=\underline{p}\mathsf{I}_{\Omega_{2}}\mathcal{L}^{2},
\end{aligned}\label{3.7}\\
&&
\begin{aligned}
&m^{2}=w_{m}^{2}(x)\delta_{\widetilde{W}}=w_{m}^{2}(x)\delta_{W_{f}}+\widetilde{w_{m}^{2}}(x)\delta_{S}, \\ &n^{2}=w_{n}^{2}(x)\delta_{\widetilde{W}}=w_{n}^{2}(x)\delta_{W_{f}}+\widetilde{w_{n}^{2}}(x)\delta_{S}, \end{aligned}\label{3.10}\\
  &&
 \begin{aligned}
 &m^{3}=E_{0}\mathsf{I}_{\Omega_{1}}\mathcal{L}^{2}+w_{m}^{3}(x)\delta_{W_{f}} +\widetilde{w_{m}^{3}}(x)\delta_{S}, \\
&n^{3}=w_{n}^{3}(x)\delta_{W_{f}}+\widetilde{w_{n}^{3}}(x)\delta_{S},
\end{aligned}
\end{eqnarray}%
where $\widetilde{w_{m}^{i}}(x),\widetilde{w_{n}^{i}}(x) \ (i=0,1,2,3)$ are new unknown weights on the free layer.

The calculations to solve the measure solution is quite similar to those presented in Section \ref{sec2}. Substituting \eqref{3.1} into \eqref{2.3}, we find
\begin{equation}
\begin{aligned}
\int_{\Omega_{1}}&{\partial_{x}\phi(x,y)} \,\dd x\dd y+\int_{0}^{x_{*}}w_{m}^{0}(x)\partial_{x}\phi(x,b(x))\sqrt{1+b'(x)^{2}}\,\dd x \\
&+\int_{x_{*}}^{\infty}\widetilde{w_{m}^{0}}(x)\partial_{x}\phi(x,s(x))\sqrt{1+s'(x)^{2}}\,\dd x+ \int_{0}^{x_{*}}w_{n}^{0}(x)\partial_{y}\phi(x,b(x))\sqrt{1+b'(x)^{2}}\,\dd x \\
&+\int_{x_{*}}^{\infty}\widetilde{w_{n}^{0}}(x)\partial_{y}\phi(x,s(x))\sqrt{1+s'(x)^{2}}\,\dd x +\int_{0}^{\infty}\phi(0,y)\,\dd y=0.
\end{aligned}
\end{equation}
Applying divergence theorem, and noticing that $s(x_{*})=b(x_{*})$, it follows
\begin{equation}\label{3.5}
\begin{aligned}
&(w_{m}^{0}(x_{*})\sqrt{1+b'(x_{*})^{2}}-\widetilde{w_{m}^{0}}(x_{*})\sqrt{1+s'(x_{*})^{2}}
)\phi(x_{*},b(x_{*}))\\
&\qquad+\int_{x_{*}}^{\infty}[s'(x)-\dfrac{\dd(\widetilde{w_{m}^{0}}(x)\sqrt{1+s'(x)^{2}})}{\dd x}]\phi(x,s(x))\,\dd x\\
&\qquad+\int_{x_{*}}^{\infty}(\widetilde{w_{n}^{0}}(x)-s'(x)\widetilde{w_{m}^{0}}(x))\sqrt{1+s'(x)^{2}}\partial_{y}\phi(x,s(x))\,\dd x=0.
\end{aligned}
\end{equation}
Since $\phi$ is arbitrary,  we get not only  \eqref{2.13} and \eqref{2.14}, but also
\begin{equation}\label{3.07}
\begin{aligned}
&\widetilde{w_{m}^{0}}(x_{*})\sqrt{1+s'(x_{*})^{2}}=w_{m}^{0}(x_{*})\sqrt{1+b'(x_{*})^{2}}=b(x_{*}), \\
&\dfrac{\dd(\widetilde{w_{m}^{0}}(x)\sqrt{1+s'(x)^{2}})}{\dd x}=s'(x), \  \widetilde{w_{n}^{0}}(x)=s'(x)\widetilde{w_{m}^{0}}(x),   \ \  x>x_{*},
\end{aligned}
\end{equation}
hence
\begin{equation}\label{38}
\begin{aligned}
&\widetilde{w_{m}^{0}}(x)=\dfrac{s(x)}{\sqrt{1+s'(x)^{2}}}, \ \ \  \widetilde{w_{n}^{0}}(x)=\dfrac{s'(x)s(x)}{\sqrt{1+s'(x)^{2}}},   \ \  x>x_{*}.
\end{aligned}
\end{equation}
Similar calculation also yields
\begin{equation}
\begin{aligned}
&\widetilde{w_{m}^{3}}(x)=\dfrac{E_{0}s(x)}{\sqrt{1+s'(x)^{2}}},\quad
\widetilde{w_{n}^{3}}(x)=\dfrac{E_{0}s'(x)s(x)}{\sqrt{1+s'(x)^{2}}}.
\end{aligned}
\end{equation}

\begin{remark}\label{rm31}
It is important to notice that  the two equations in \eqref{38} imply the slip condition \eqref{1.9} on the free layer, namely \eqref{1.9} is  natural  on the free boundary $S$.
\end{remark}

By virtue of  {$$w_{p}n_{1}\delta_{\widehat{W}}=w^{f}_{p}(x)n_{1}(x)\delta_{W_{f}} +w^{e}_{p}(y)n_{1}\delta_{W_{e}} =w^{f}_{p}(x)\dfrac{-b'(x)}{\sqrt{1+b'(x_{*})^{2}}}\delta_{W_{f}} +w^{e}_{p}(y)\delta_{W_{e}},$$}
from \eqref{3.7} and \eqref{2.4}, removing the test function $\phi$, we obtain that
\begin{equation}\label{3.12}
\begin{aligned}
&w_{m}^{1}(0)=0, \ w_{n}^{1}(0)=0, \ w_{n}^{1}(x)=b'(x) w_{m}^{1}(x), \ \dfrac{\dd(w_{m}^{1}(x)\sqrt{1+b'(x)^{2}})}{\dd x}=(1-w_{p}(x))b'(x),\\
&\widetilde{w_{m}^{1}}(x_{*})\sqrt{1+s'(x_{*})^{2}}=w_{m}^{1}(x_{*})\sqrt{1+b'(x_{*})^{2}} =\dfrac{H(x_{*})}{\sqrt{1+b'(x_{*})^{2}}},
 \\ &\widetilde{w_{n}^{1}}(x)=s'(x)\widetilde{w_{m}^{1}}(x), \ \ w^{e}_{p}(y)=\underline{p}, \ \  \dfrac{\dd(\widetilde{w_{m}^{1}}(x)\sqrt{1+s'(x)^{2}})}{\dd x}=(1-\underline{p})s'(x),
\end{aligned}
\end{equation}
from which we solve
\begin{equation}\label{311new}
\begin{aligned}
\widetilde{w_{m}^{1}}(x)=\dfrac{(1-\underline{p})[s(x)-b(x_{*})]+\dfrac{H(x_{*})}{\sqrt{1+b'(x_{*})^{2}}}}{\sqrt{1+s'(x)^{2}}},\qquad x\ge x_*.
\end{aligned}
\end{equation}
Similarly noticing that $$w_{p}n_{2}\delta_{\widehat{W}}=w^{f}_{p}(x)n_{2}\delta_{W_{f}}+w^{e}_{p}(y)n_{2}\delta_{W_{e}}
=\dfrac{w^{f}_{p}(x)}{\sqrt{1+b'(x)^{2}}}\delta_{W_{f}},$$
thanks to \eqref{3.10} and \eqref{2.5}, we find
\begin{align}
&w_{m}^{2}(0)=0,\ \ w_{n}^{2}(x)=b'(x) w_{m}^{2}(x), \ \ \dfrac{\dd(w_{m}^{2}(x)\sqrt{1+b'(x)^{2}})}{\dd x}=w^{f}_{p}(x),\label{311}\\
&\widetilde{w_{m}^{2}}(x_{*})\sqrt{1+s'(x_{*})^{2}}=w_{m}^{2}(x_{*})
\sqrt{1+b'(x_{*})^{2}}=\dfrac{b'(x_{*})H(x_{*})}{\sqrt{1+b'(x_{*})^{2}}},
\label{312}\\  &\widetilde{w_{n}^{2}}(x)=s'(x)\widetilde{w_{m}^{2}}(x),\ \   \dfrac{\dd(\widetilde{w_{m}^{2}}(x)\sqrt{1+s'(x)^{2}})}{\dd x}=\underline{p}.\label{313}
\end{align}
Note that \eqref{311}, as we expected, is the same as \eqref{2.22} and \eqref{2.23}. By
\eqref{312}\eqref{313}, we discover
\begin{equation}\label{3.12}
\begin{aligned}
&\widetilde{w_{m}^{2}}(x)=\dfrac{\underline{p}(x-x_{*})+\dfrac{b'(x_{*})H(x_{*})}{\sqrt{1+b'(x_{*})^{2}}}}{\sqrt{1+s'(x)^{2}}},\qquad x\ge x_*.
\end{aligned}
\end{equation}
According to \eqref{2.8}, from \eqref{38}, \eqref{311new} and \eqref{3.12}, we obtain
\begin{equation}
\begin{aligned}
&u|_{S}=\dfrac{(1-\underline{p})[s(x)-b(x_{*})]+\dfrac{H(x_{*})}{\sqrt{1+b'(x_{*})^{2}}}}{s(x)},\\ &v|_{S}=\dfrac{\underline{p}(x-x_{*})+\dfrac{b'(x_{*})H(x_{*})}{\sqrt{1+b'(x_{*})^{2}}}}{s(x)}.
\end{aligned}
\end{equation}
Combining with \eqref{2.30}, we have
\begin{equation}\label{3.14}
\begin{aligned}
&u=\mathsf{I}_{\Omega_{1}}+\dfrac{H(x)}{b(x)\sqrt{1+b'(x)^{2}}}\mathsf{I}_{W_{f}}+\dfrac{(1-\underline{p})[s(x)-b(x_{*})]+\dfrac{H(x_{*})}{\sqrt{1+b'(x_{*})^{2}}}}{s(x)}\mathsf{I}_{S},
\\ &v=\dfrac{b'(x)H(x)}{b(x)\sqrt{1+b'(x)^{2}}}\mathsf{I}_{W_{f}}+\dfrac{\underline{p}(x-x_{*})+\dfrac{b'(x_{*})H(x_{*})}{\sqrt{1+b'(x_{*})^{2}}}}{s(x)}\mathsf{I}_{S},\\
&{E=E_{0}\mathsf{I}_{\Omega_{1}}+\underline{E}\mathsf{I}_{\Omega_{2}}+E_{0}\mathsf{I}_{W_{f}}+E_{0}\mathsf{I}_{S}}.
\end{aligned}
\end{equation}
Hence the measure of density is
\begin{equation}\label{3.15}
\begin{aligned}
&\varrho=\mathsf{I}_{\Omega_{1}}\LL^{2}+\underline{\rho}\mathsf{I}_{\Omega_{2}}\LL^{2}+\dfrac{(b(x))^{2}}{H(x)}\mathsf{\delta}_{W_{f}}+\dfrac{(s(x))^{2}}{\sqrt{1+s'(x)^{2}}[(1-\underline{p})(s(x)-b(x_{*}))+\dfrac{H(x_{*})}{\sqrt{1+b'(x_{*})^{2}}}]}\delta_{S}.\\
\end{aligned}
\end{equation}

From \eqref{3.14} and \eqref{3.15}, if we could solve the free layer $y=s(x)$, then a measure solution to Problem 2 is determined. By the slip condition \eqref{1.9}, there is an ordinary differential equation for $s(x)$:
\begin{equation}\label{3.16}
\left \{
\begin{aligned}
&\underline{p}(x-x_{*})+\dfrac{b'(x_{*})H(x_{*})}{\sqrt{1+b'(x_{*})^{2}}}=s'(x)[(1-\underline{p})(s(x)-b(x_{*}))+\dfrac{H(x_{*})}{\sqrt{1+b'(x_{*})^{2}}}],\\
&s(x_{*})=b(x_{*}).
\end{aligned} \right.
\end{equation}
Integrating both sides of \eqref{3.16} yields
\begin{equation}\label{3.17}
\begin{aligned}
&(1-\underline{p})s^{2}(x)+2[\dfrac{H(x_{*})}{\sqrt{1+b'(x_{*})^{2}}} -(1-\underline{p})b(x_{*})]s(x)-\underline{p}(x-x_{*})^{2}\\
&-\dfrac{2b'(x_{*})H(x_{*})}{\sqrt{1+b'(x_{*})^{2}}}(x-x_{*})+b(x_{*})[(1-\underline{p})b(x_{*}) -\dfrac{2H(x_{*})}{\sqrt{1+b'(x_{*})^{2}}}]=0,
\end{aligned}
\end{equation}
where $s(x_{*})=b(x_{*}).$ Obviously, solution of $s(x)$ depends on the value of $\underline{p}$. We will discuss this in the next subsection.

\subsection{The shape of free layer depending on  pressure  of downward static gas}\label{sec32}

We divide the analysis into four cases, namely $\underline{p}=1$, $\underline{p}\in(0,1)$,  $\underline{p}=0$, and
$\underline{p}>1$.

\subsubsection{Case 1: $\underline{p}=1$}\label{sec321}
For  this simple  case,  from \eqref{3.17} we easily  see
\begin{equation}\label{3.32}
\begin{aligned}
&s(x)=\dfrac{\sqrt{1+b'(x_{*})^{2}}(x-x_{*})^{2}}{2H(x_{*})}+b'(x_{*})(x-x_{*}) +b(x_{*}),\quad \forall x\ge x_*.
\end{aligned}
\end{equation}

\medskip
In the following we focus on the more involved situation that
$\underline{p}\neq 1$.  We solve from \eqref{3.17} that
\begin{equation}\label{3.18}
s(x)=\dfrac{\sqrt{\Delta}}{1-\underline{p}}
+b(x_{*})-\dfrac{H(x_{*})}{\sqrt{1+b'(x_{*})^{2}}(1-\underline{p})}.
\end{equation}
To make sure \eqref{3.18} is meaningful, it shall holds
\begin{equation}\label{3.19}
\Delta\triangleq (1-\underline{p})\underline{p}(x-x_{*})^{2}+\dfrac{2(1-\underline{p})H(x_{*})b'(x_{*})}{\sqrt{1+b'(x_{*})^{2}}}(x-x_{*})+(\dfrac{H(x_{*})}{\sqrt{1+b'(x_{*})^{2}}})^{2}\geq 0, \ \ x\geq x_{*}.
\end{equation}

\subsubsection{Case 2: $\underline{p}\in(0,1)$}\label{sec322}

For this case, $(1-\underline{p})\underline{p}>0$, hence the terms in \eqref{3.19} are always nonnegative for $x\ge x_*$, thanks to the assumption that $b'(x)\ge0$ and $H(x_*)>0$. Therefore for this case the solution of \eqref{3.16} is given by
\eqref{3.18}.

Furthermore, from \eqref{3.18} we see that $s(x)$ is of the order $\sqrt{\dfrac{\underline{p}}{1-\underline{p}}}x$ as $x\to\infty$.

\begin{remark}
As a special case, for $\underline{p}=\dfrac{(b'(x_ {*}))^{2}}{1+(b'(x_{*}))^{2}}$, the free layer is the straight line
$s(x)=b'(x_{*})(x-x_{*})+b(x_{*}).$
\end{remark}

\subsubsection{Case 3: $\underline{p}=0$}\label{sec323}

By \eqref{3.18} one has
\begin{equation}\label{3.31}
\begin{aligned}
&s(x)=\sqrt{\dfrac{2H(x_{*})b'(x_{*})}{\sqrt{1+b'(x_{*})^{2}}}(x-x_{*})+(\dfrac{H(x_{*})}{\sqrt{1+b'(x_{*})^{2}}})^{2}}+b(x_{*})-\dfrac{H(x_{*})}{\sqrt{1+b'(x_{*})^{2}}}.
\end{aligned}
\end{equation}
Therefore the free layer is of the order $\sqrt{x}$ as $x\to\infty$. In particular, if $b'(x_{*})=0$, then \eqref{3.31} implies that $s(x)=b(x_{*})$, namely the free layer is a straight line parallel to the upcoming limiting hypersonic flow and no particle impinges on the free layer.

\begin{remark}\label{rm32}
We note that this case includes the situation that the state below the free layer is vacuum.
\end{remark}

\begin{remark}\label{rm33}
We have shown in Remark \ref{rm22} nonexistence of nontrivial ramp that suffices no force from the limiting hypersonic flow. However, the above results show that, we could design a forebody with boundary $W_f$ to form a shock layer, and then design an afterbody with boundary \eqref{3.31} so that there is no force acting on the afterbody; for this case the free layer becomes a shield which bears all the force from the limiting hypersonic flow. We note that such ideas have already appeared in discussions in \cite[p.142]{Hayes2004hypersonic}.
\end{remark}

\subsubsection{Case 4: $\underline{p}>1$}\label{sec324}

For this case, $(1-\underline{p})\underline{p}<0$ and some singularity will appear if we still assume that the free layer is a graph of a function of $x$. So we use a parametric representation $(x(t),y(t))$ of the free layer, with $t$ the parameter satisfying $x(t_{*})={ x_{*}, y(t_{*})=b(x_{*})}$. Substituting this into the definition of measure solutions,  analysis like before shows that
\begin{eqnarray}
&&\begin{cases}
\frac{\widetilde{w_{n}^{0}}(t_{*})\sqrt{x'(t_{*})^{2}+y'(t_{*})^{2}}}{y'(t_{*})}=w_{m}^{0}(x_{*})\sqrt{1+b'(x_{*})^{2}}=b(x_{*}),&\\
\widetilde{w_{n}^{0}}(t)x'(t)=y'(t)\widetilde{w_{m}^{0}}(t), \ \  \frac{\widetilde{w_{n}^{0}}(t)\sqrt{x'(t)^{2}+y'(t)^{2}}}{y'(t)}=y(t),
&  t>t_{*},
\end{cases}\\
&&\begin{cases}
\frac{\widetilde{w_{n}^{1}}(t_{*})\sqrt{x'(t_{*})^{2}+y'(t_{*})^{2}}}{y'(t_{*})}=\frac{H(x_{*})}{\sqrt{1+b'(x_{*})^{2}}}, \
 \widetilde{w_{n}^{1}}(t)x'(t)=y'(t)\widetilde{w_{m}^{1}}(t), &\\
\frac{\widetilde{w_{n}^{1}}(t)\sqrt{x'(t)^{2}+y'(t)^{2}}}{y'(t)}=(1-\underline{p})(y(t)-b(x_{*}))+\frac{H(x_{*})}{\sqrt{1+b'(x_{*})^{2}}},  & t>t_{*},
\end{cases}\\
&&\begin{cases}
\frac{\widetilde{w_{n}^{2}}(t_{*})\sqrt{x'(t_{*})^{2}+y'(t_{*})^{2}}}{y'(t_{*})}=\frac{b'(x_{*})H(x_{*})}{\sqrt{1+b'(x_{*})^{2}}},  \
 \widetilde{w_{n}^{2}}(t)x'(t)=y'(t)\widetilde{w_{m}^{2}}(t), &\\
\frac{\widetilde{w_{n}^{2}}(t)\sqrt{x'(t)^{2}+y'(t)^{2}}}{y'(t)}=\underline{p}(x(t)-x_{*})+\frac{b'(x_{*})H(x_{*})}{\sqrt{1+b'(x_{*})^{2}}}, & t>t_{*}.
\end{cases}
\end{eqnarray}
Hence we find for $t\ge t_*$,
\begin{equation}\label{3.23}
\begin{aligned}
&u|_{S}=\dfrac{(1-\underline{p})[y(t)-b(x_{*})]+\dfrac{H(x_{*})}{\sqrt{1+b'(x_{*})^{2}}}}{y(t)},
\\ &v|_{S}=\dfrac{\underline{p}(x(t)-x_{*})+\dfrac{b'(x_{*})H(x_{*})}{\sqrt{1+b'(x_{*})^{2}}}}{y(t)},
\end{aligned}
\end{equation}
and
\begin{equation}\label{3.24}
\begin{aligned}
&\varrho=\mathsf{I}_{\Omega_{1}}\LL^{2}+\underline{\rho}\mathsf{I}_{\Omega_{2}}\LL^{2}
+w_\rho^f(x)\mathsf{\delta}_{W_{f}}+w_\rho^S(t)\delta_{S},
\end{aligned}
\end{equation}
where $$w_\rho^f(x)=\dfrac{(b(x))^{2}}{H(x)},\qquad 0\le x\le x_*$$ and $$w_\rho^S(t)=\dfrac{y'(t)(y(t))^{2}}
{\sqrt{x'(t)^{2}+y'(t)^{2}}[\underline{p}(x(t)-x_{*})+\displaystyle\frac{b'(x_{*})H(x_{*})}{\sqrt{1+b'(x_{*})^{2}}}]}, \qquad t\ge t_*,$$
while $(x(t),y(t))$ ($t\ge t_*$) solve the following ordinary differential equations
\begin{equation}\label{3.25}
\left \{
\begin{aligned}
&[\underline{p}(x(t)-x_{*})+\dfrac{b'(x_{*})H(x_{*})}{\sqrt{1+b'(x_{*})^{2}}}]x'(t)=
y'(t)[(1-\underline{p})(y(t)-b(x_{*}))+\dfrac{H(x_{*})}{\sqrt{1+b'(x_{*})^{2}}}],\\
&y(t_{*})=b(x_{*}).
\end{aligned} \right.
\end{equation}
Direct integration shows the solution is an ellipse passing $(x_*, b(x_*))$:
\begin{multline}\label{331}
\underline{p}\left[x(t)-x_*+\frac{1}{\underline{p}}\frac{b'(x_*)H(x_*)}{\sqrt{1+b'(x_*)^2}}\right]^2+
(\underline{p}-1)\left[y(t)-b(x_*)-\frac{1}{\underline{p}-1}\frac{H(x_*)}{\sqrt{1+b'(x_*)^2}}\right]^2
\\=\left[\frac{1}{\underline{p}}b'(x_*)^2+\frac{1}{\underline{p}-1}\right]\frac{H(x_*)^2}{1+b'(x_*)^2}.
\end{multline}
At the right-most point $(x_{\vartriangle}, y_{\vartriangle})$, where
\begin{align}\label{3.28}
&x_{\vartriangle}=x(t_{\vartriangle})=x_{*}+\dfrac{H(x_{*})}{\underline{p}\sqrt{1+b'(x_{*})^{2}}}\left(\sqrt{b'(x_{*})^{2}
+\dfrac{\underline{p}}{\underline{p}-1}}-b'(x_{*})\right),\\
&y_{\vartriangle}=y(t_{\vartriangle})=b(x_{*})+\dfrac{H(x_{*})}{\sqrt{1 +b'(x_{*})^{2}}(\underline{p}-1)},\label{3.300}
\end{align}
we have
\begin{equation}\label{3.30}
\begin{aligned}
&u\lvert_{(x_{\vartriangle},\ y_{\vartriangle})}=0, \ \ \  v\lvert_{(x_{\vartriangle},\ y_{\vartriangle})}
={\frac{\dfrac{H(x_{*})}{\sqrt{1+b'(x_{*})^{2}}}\sqrt{b'(x_{*})^{2}
+\dfrac{\underline{p}}{\underline{p}-1}}}{b(x_{*})+\dfrac{H(x_{*})}{\sqrt{1 +b'(x_{*})^{2}}(\underline{p}-1)}}}>0,
\end{aligned}
\end{equation}
and the weight
$$w_\rho^S(t_{\vartriangle})={\displaystyle \frac{y_{\vartriangle}^{2}}{\dfrac{H(x_{*})}{\sqrt{1+b'(x_{*})^{2}}}\sqrt{b'(x_{*})^{2}
+\dfrac{\underline{p}}{\underline{p}-1}}}}<\infty.$$

Since we are treating a hyperbolic problem upside of the free layer, with the positive $x$-axis the hyperbolic direction, by causality, the upper branch of the ellipse \eqref{331} shall no longer be a part of the free layer. Also, noticing that $u(x_{\vartriangle}, y)=0$ holds only at $y=y_{\vartriangle}$, the line segment $x=x_{\vartriangle}$ with $y\ge y_{\vartriangle}$ cannot be a free layer. So we conclude that the measure solution blows up at the point $(x_{\vartriangle}, y_{\vartriangle})$, in the sense that the free layer satisfying \eqref{3.30} and cannot be prolonged anymore.

\subsection{Conclusion and examples}\label{sec33}
In summary, we have the following lemma.
\begin{lemma}

  i) \ For $\underline{p}=1$, the free layer takes the form \eqref{3.32}.

  ii) \ For $0\le\underline{p}<1$, the free layer $y=s(x)$ is given by \eqref{3.18}, defined for all $x\ge x_{*}$.

    iii) \ For $\underline{p}>1$, the free layer exists only for $x_{*}\leq x\leq x_\vartriangle$ and ending at the point $(x_{\vartriangle}, y_{\vartriangle})$, where it rolls up and can not be prolonged. So in such a sense the measure solution blows up.
\end{lemma}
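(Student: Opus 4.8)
The plan is to read the lemma as a clean summary of the case analysis carried out in Subsection~\ref{sec32}, organized according to the sign of the quantity $(1-\underline{p})\underline{p}$. The starting point is that, once the weights on $S$ have been fixed by the mass and momentum balances \eqref{3.14}--\eqref{3.15}, the slip condition \eqref{1.9} on the free layer becomes the first-order ordinary differential equation \eqref{3.16} for $s(x)$, with the initial condition $s(x_*)=b(x_*)$ enforcing that $S$ issues from the corner of the ramp. Since both sides of \eqref{3.16} are exact derivatives in $x$, a single integration produces the quadratic relation \eqref{3.17} in $s(x)$. Thus the entire lemma reduces to describing the solution set of \eqref{3.17} as $\underline{p}$ varies, together with the interpretation of where that solution ceases to define a genuine free layer.

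First I would dispose of the degenerate case $\underline{p}=1$: here the coefficient of $s^2$ in \eqref{3.17} vanishes, the relation is linear in $s(x)$, and solving it gives \eqref{3.32}, manifestly defined for all $x\ge x_*$. This establishes item~i). For $\underline{p}\neq 1$ I would solve the quadratic \eqref{3.17} explicitly to obtain \eqref{3.18}, which is meaningful precisely when the discriminant $\Delta$ of \eqref{3.19} is nonnegative. When $0<\underline{p}<1$ one has $(1-\underline{p})\underline{p}>0$, so every term in \eqref{3.19} is nonnegative once we invoke the standing hypotheses $b'(x)\ge0$ and $H(x_*)>0$; hence $\Delta\ge0$ for all $x\ge x_*$ and \eqref{3.18} furnishes a global free layer. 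The boundary value $\underline{p}=0$ is recovered by specializing \eqref{3.18} to \eqref{3.31}, so item~ii) covers the whole range $0\le\underline{p}<1$.

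The substantive case is $\underline{p}>1$, where $(1-\underline{p})\underline{p}<0$, so the leading term of $\Delta$ in \eqref{3.19} is negative and $\Delta$ must turn negative for large $x-x_*$; the graph ansatz $y=s(x)$ therefore cannot persist. Here I would switch to a parametric description $(x(t),y(t))$, for which the same balance laws yield the symmetric system \eqref{3.25}; direct integration turns \eqref{3.25} into the ellipse \eqref{331} through $(x_*,b(x_*))$. The free layer is then the arc of this ellipse running from the corner to the rightmost point $(x_\vartriangle,y_\vartriangle)$ computed in \eqref{3.28}--\eqref{3.300}.

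The crux, and the step I expect to be the main obstacle, is justifying that the free layer cannot be prolonged beyond $(x_\vartriangle,y_\vartriangle)$. This is \emph{not} an algebraic breakdown---the density weight $w_\rho^S$ stays finite there---but a loss of admissibility forced by causality. The flow occupying the region above $S$ is a pressureless hyperbolic field with $x$ as the evolution direction, which requires $u>0$; yet the computation \eqref{3.30} shows $u=0$ exactly at $(x_\vartriangle,y_\vartriangle)$ while $v>0$. Continuing along the upper branch of the ellipse \eqref{331} would force $u<0$, i.e. backward propagation in $x$, violating the hyperbolic causality of the upstream problem; and the vertical segment $\{x=x_\vartriangle,\ y\ge y_\vartriangle\}$ is excluded because $u$ vanishes only at the single height $y=y_\vartriangle$. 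I would therefore conclude that the free layer rolls up and the measure solution blows up in the stated geometric sense, which is item~iii) and completes the lemma.
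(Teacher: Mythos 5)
Your proposal is correct and follows essentially the same route as the paper: integrating the slip-condition ODE \eqref{3.16} to the quadratic \eqref{3.17}, splitting on the sign of $(1-\underline{p})\underline{p}$ via the discriminant \eqref{3.19}, and for $\underline{p}>1$ passing to the parametric ellipse \eqref{331} with the causality argument at the rightmost point $(x_{\vartriangle},y_{\vartriangle})$ where $u=0$ but the weight stays finite. No substantive differences from the paper's own treatment in Subsection \ref{sec32}.
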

This finishes  proof of Theorem \ref{thm1.2}.

\begin{figure}[H]
\centering

\subfigure[$\underline{p}=0,
	s(x)=\sqrt{\dfrac{2}{3}x-\dfrac{4}{9}}+\dfrac{\sqrt{2}}{3}.$]{
\begin{minipage}[t]{0.5\linewidth}
\centering
\includegraphics[width=2in]{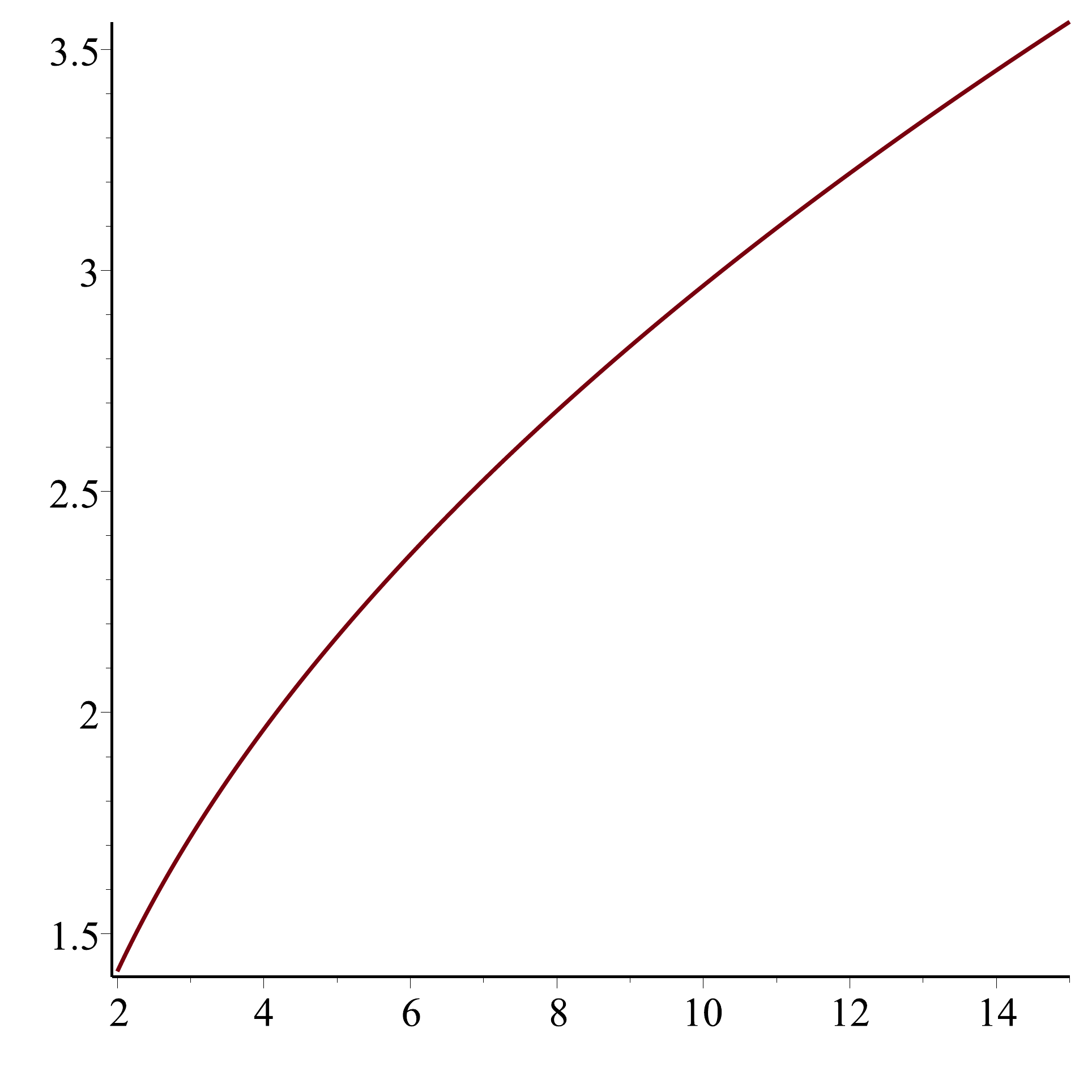}
\end{minipage}%
}%
\subfigure[$\underline{p}=\dfrac{1}{2},
	s(x)=2\sqrt{\dfrac{1}{4}x^2-\dfrac{2}{3}x+\dfrac{11}{9}}-\dfrac{\sqrt{2}}{3}.$]{
\begin{minipage}[t]{0.5\linewidth}
\centering
\includegraphics[width=2in]{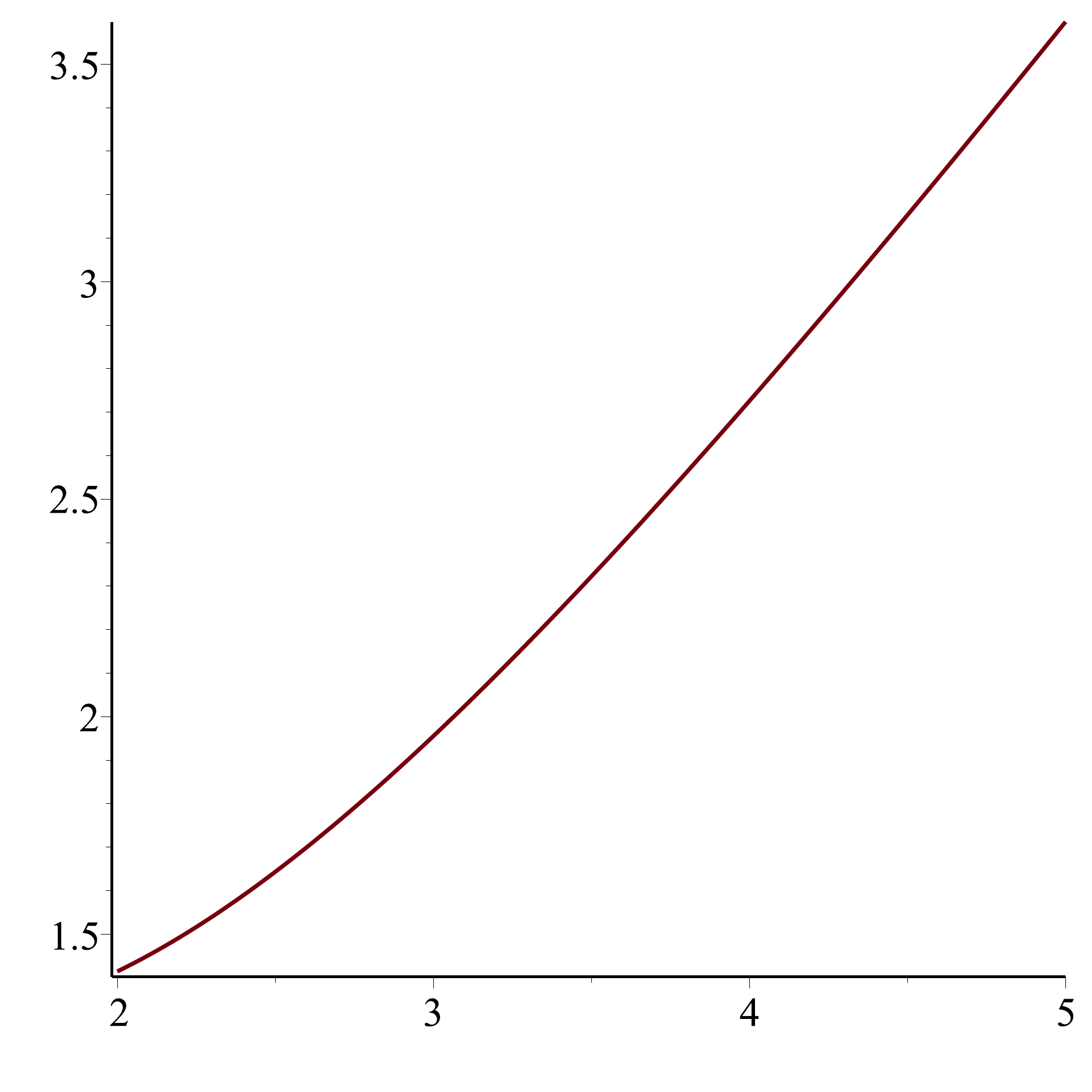}
\end{minipage}%
}%

\subfigure[$\underline{p}=1,
	s(x)=\dfrac{3}{4\sqrt{2}}x^2-\dfrac{5}{2\sqrt{2}}x+2\sqrt{2}.$]{
\begin{minipage}[t]{0.5\linewidth}
\centering
\includegraphics[width=2in]{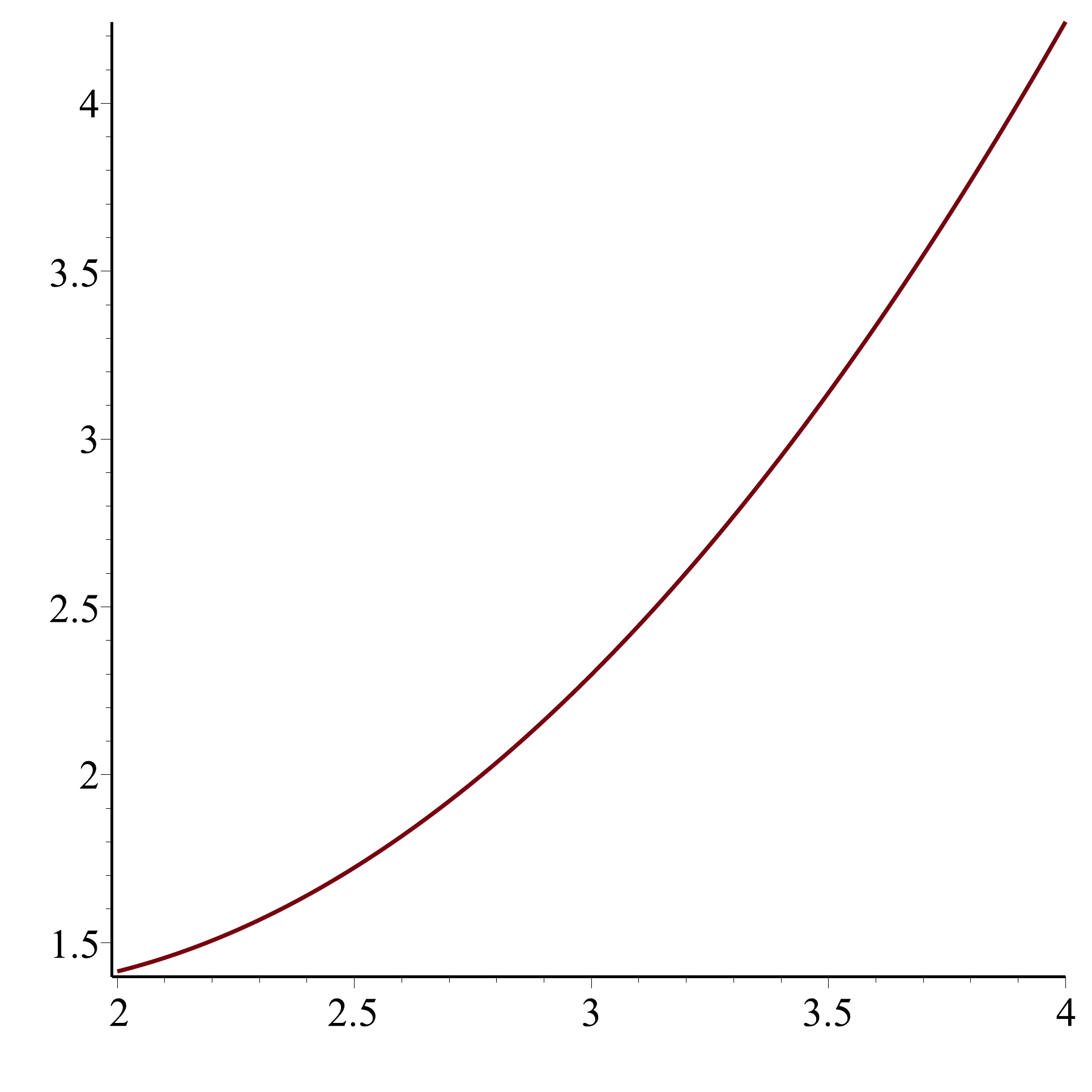}
\end{minipage}
}%
\subfigure[$\underline{p}=2,
		s(x)=-\sqrt{-2x^2+\dfrac{22x}{3}-52/9}+\dfrac{5\sqrt{2}}{3}.$]{
\begin{minipage}[t]{0.5\linewidth}
\centering
\includegraphics[width=2in]{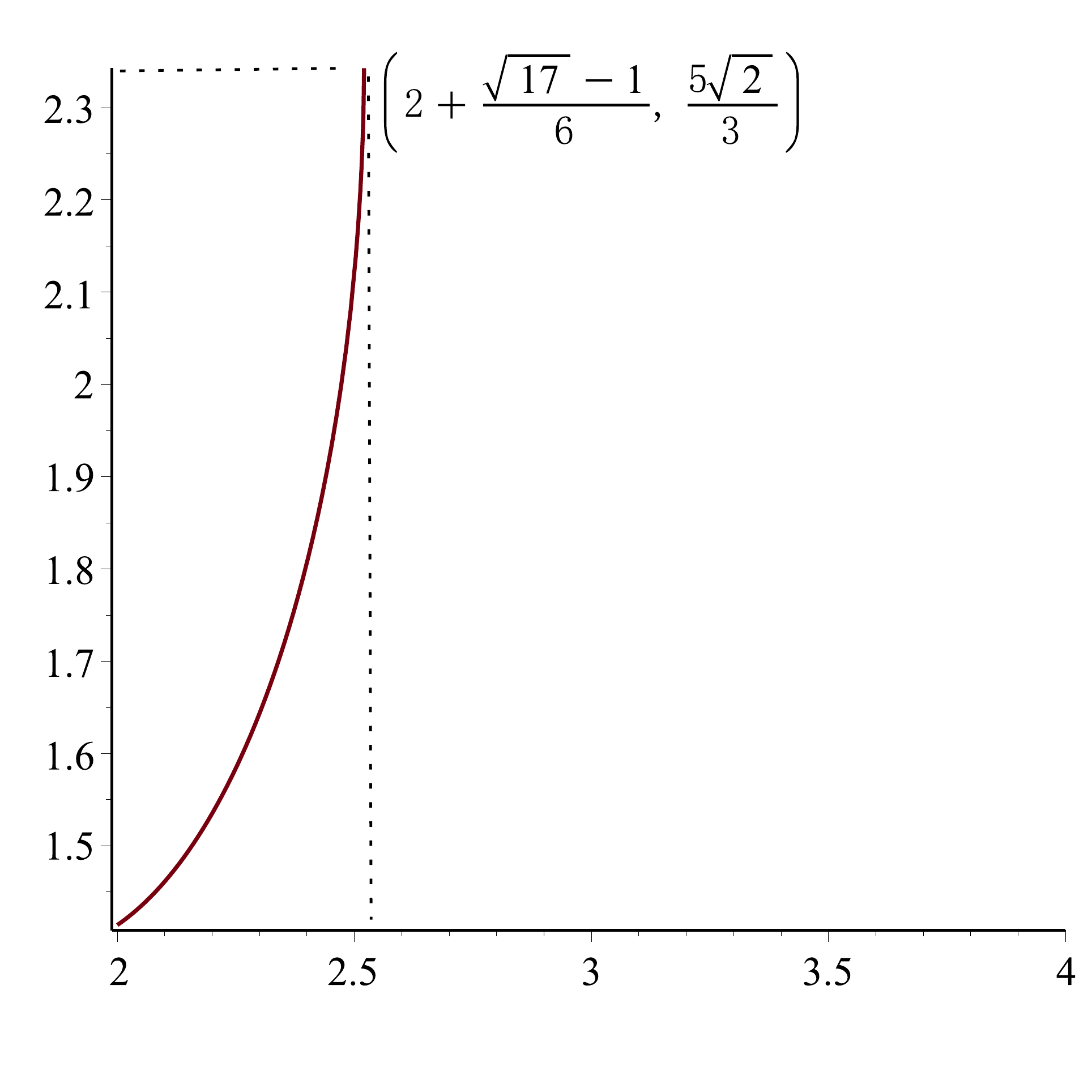}
\end{minipage}
}%

\centering
\caption{(a)(b)(c)(d) demonstrate the free layer for different pressure $\underline{p}$, with $b(x)=\sqrt{x}$ and $x_{*}=2$ being fixed.  So $(2,\sqrt{2})$ is the starting point of the free layer. In (d) the point
$(2+\dfrac{\sqrt{17}-1}{6},\dfrac{5\sqrt{2}}{3})$ is where the free layer rolls up.}\label{fig5}
\end{figure}

\begin{remark}
As an example, we take $b(x)=\sqrt{x}$, $x_{*}=2$ to draw graphs of free layers, with different pressure of the downward static gas. For $\underline{p}=0$, we have  $s(x)=\sqrt{\dfrac{2}{3}x-\dfrac{4}{9}}+\dfrac{\sqrt{2}}{3}$ (see Figure \ref{fig5}(a)).
For $\underline{p}=\dfrac{1}{2}$,
$s(x)=2\sqrt{\dfrac{1}{4}(x-2)^2+\dfrac{1}{3}x+\dfrac{2}{9}}-\dfrac{\sqrt{2}}{3}$
(see Figure \ref{fig5}(b)).
For $\underline{p}=1$, $s(x)=\dfrac{3} {4\sqrt{2}}(x-2)^2+\dfrac{1}{2\sqrt{2}}(x-2)+\sqrt{2}$ (see Figure \ref{fig5}(c)). For $\underline{p}=2$,
$s(x)=-\sqrt{-2(x-2)^2-\dfrac{2x}{3}+20/9}+\dfrac{5\sqrt{2}}{3}$ and $(x_{\vartriangle}, s(x_{\vartriangle}))=(2+\dfrac{\sqrt{17}-1}{6},\dfrac{5\sqrt{2}}{3})$, where the free layer rolls up  (see Figure \ref{fig5}(d)).

We also draw these graphs in the same frame for comparison, see Figure \ref{fig8}.
\vspace{-3cm}
\begin{figure}[H]
\centering
\includegraphics[width=6in]{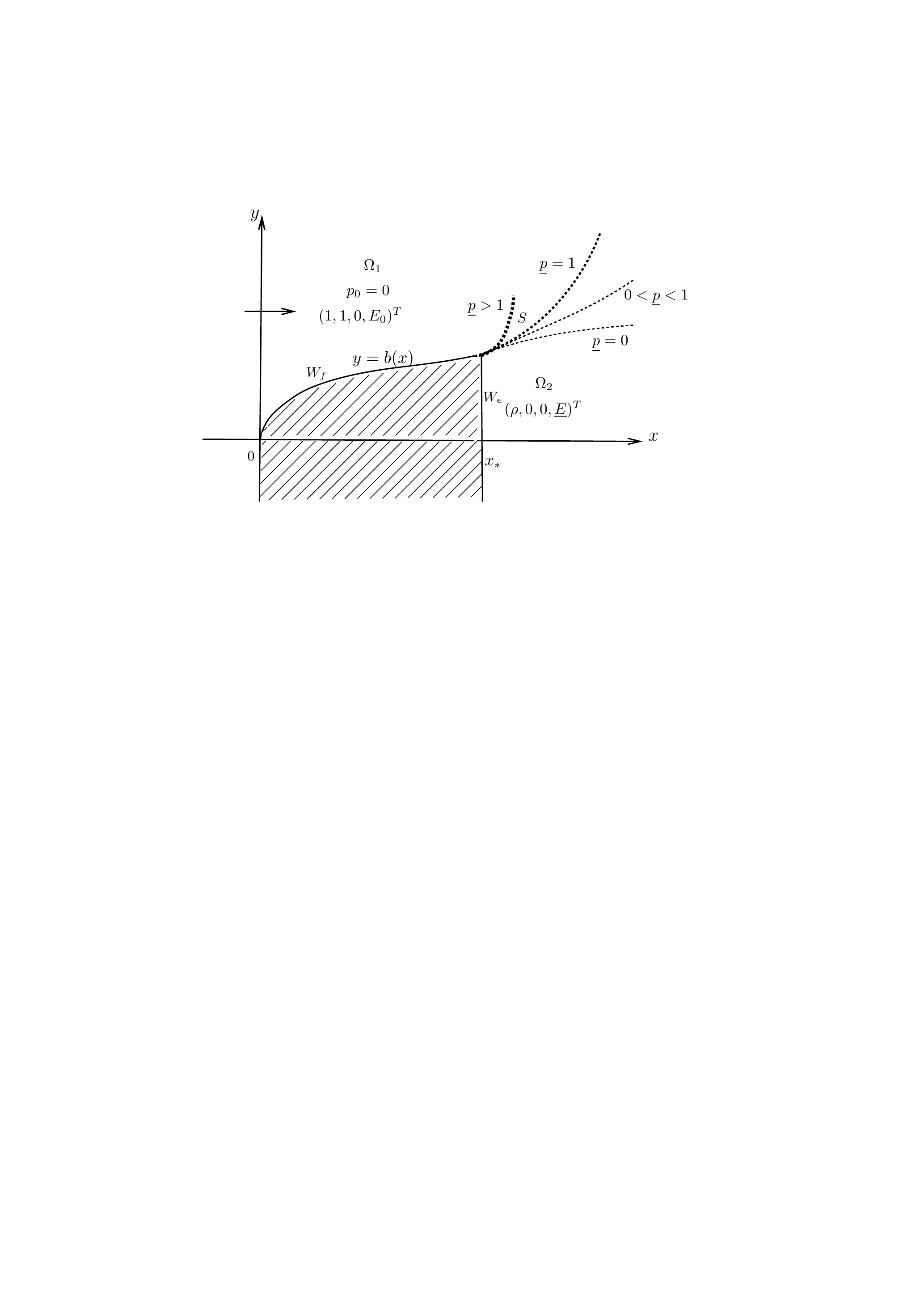}\\
\vspace{-13cm}\caption {Different shapes of free layers for different pressures in ``dead gas zone".}\label{fig8}
\end{figure}
\end{remark}

\section {Interactions of limiting hypersonic flows and pressureless jets}\label{sec4}

In this section we study Problem 3. We may  define its measure solution in the sprit of Definition \ref{def2.2}. Since there are initial data on $x=x_*$, item i) in Definition \ref{def2.2} shall be replaced by
\begin{eqnarray}
&&\langle m^{0},\partial_{x}\phi\rangle+\langle n^{0},\partial_{y}\phi\rangle+\int_{0}^{\infty}\rho_{0}u_{0}\phi(0,y)\,\dd y +\int_{-\infty}^{b(x_{*})}\underline{\rho}\underline{u} \phi(x_{*},y)\,\dd y=0,\label{4.1}\\
&&\langle m^{1},\partial_{x}\phi\rangle+\langle n^{1},\partial_{y}\phi\rangle+\langle\wp,\partial_{x}\phi\rangle+\langle w_{p}n_{1}\delta_{W},\phi\rangle+\int_{0}^{\infty}(\rho_{0}u_{0}^{2}+p_{0})\phi(0,y)\,\dd y\nonumber\\		&&\qquad\qquad\qquad+\int_{-\infty}^{b(x_{*})}(\underline{\rho}\underline{u}^{2}+\underline{p}) \phi(x_{*},y)\,\dd y=0,\label{4.2}\\
&&\langle m^{2},\partial_{x}\phi\rangle +\langle n^{2},\partial_{y}\phi\rangle+\langle\wp,\partial_{y}\phi\rangle+\langle w_{p}n_{2}\delta_{W},\phi\rangle+\int_{0}^{\infty}\rho_{0}u_{0}v_{0}\phi(0,y)\,\dd y\nonumber\\ &&\qquad\qquad\qquad+\int_{-\infty}^{b(x_{*})}\underline{\rho}\underline{u} \ \underline{v} \phi(x_{*},y)\,\dd y=0,\label{4.3}\\
&&\langle m^{3},\partial_{x}\phi\rangle+\langle n^{3},\partial_{y}\phi\rangle+\int_{0}^{\infty}\rho_{0}u_{0}E_{0}\phi(0,y)\,\dd y +\int_{-\infty}^{b(x_{*})}(\underline{\rho}\underline{u} \ \underline{E}) \phi(x_{*},y)\,\dd y=0,\label{4.4}
\end{eqnarray}
where $\phi\in C_{0}^{1}(\Bbb R^{2})$ is an arbitrary test function. The other requirements in Definition \ref{def2.2} are unchanged. 	

To construct a measure solution with physical significance, we need the following {\emph{entropy condition}}:
\begin{equation}\label{45}
\frac{\underline{v}}{\underline{u}}\ge \frac{v|_S}{u|_S}=s'(x)\ge \frac{v_0}{u_0},
 \end{equation}
where $S: y=s(x)$ is a free layer separating constant state $U_0$ lying above it and constant state $\underline{U}$ below it. It is a generalization of Lax entropy conditions for shocks, and is widely used in the studies of delta shocks, see, for example, \cite[p.329, (3.5)]{Cheng2011Delta}, or below (2.5) in \cite[p.749]{shensun2009} (for the unsteady case). It excludes some anomalous measure solutions such as the case that particles escape from the free layer.
To fulfill \eqref{45}, the analysis below is separated into two subsections.

\subsection{Measure solution consists of two piecewise-constant states without vacuum}\label{sec41}

For this case we assume that
\begin{eqnarray}
&&\begin{aligned}\label{4.5}
&m^{0}=\mathsf{I}_{\Omega_{1}}\mathcal{L}^{2}+
\underline{\rho}\underline{u}\mathsf{I}_{\Omega_{2}}\mathcal{L}^{2}+w_{m}^{0}(x)\delta_{\widetilde{W}}
=\mathsf{I}_{\Omega_{1}}\mathcal{L}^{2}+\underline{\rho}\underline{u}\mathsf{I}_{\Omega_{2}}\mathcal{L}^{2}+w_{m}^{0}(x)\delta_{W_{f}}
+\widetilde{w_{m}^{0}}(x)\delta_{S}, \\
&n^{0}=\underline{\rho}\underline{v}\mathsf{I}_{\Omega_{2}}\mathcal{L}^{2}+w_{n}^{0}(x)\delta_{\widetilde{W}}
=\underline{\rho}\underline{v}\mathsf{I}_{\Omega_{2}}\mathcal{L}^{2}+w_{n}^{0}(x)\delta_{W_{f}}+\widetilde{w_{n}^{0}}(x)\delta_{S},
\end{aligned}\\
 &&\begin{aligned}\label{4.9}
&m^{1}=\mathsf{I}_{\Omega_{1}}\mathcal{L}^{2}+\underline{\rho}\underline{u}^{2}\mathsf{I}_{\Omega_{2}}\mathcal{L}^{2}
+w_{m}^{1}(x)\delta_{\widetilde{W}}=\mathsf{I}_{\Omega_{1}}\mathcal{L}^{2}+\underline{\rho}\underline{u}^{2}\mathsf{I}_{\Omega_{2}}\mathcal{L}^{2}
+w_{m}^{1}(x)\delta_{W_{f}}+\widetilde{w_{m}^{1}}(x)\delta_{S},\\
&n^{1}=\underline{\rho}\underline{u} \ \underline{v}\mathsf{I}_{\Omega_{2}}\mathcal{L}^{2}+w_{n}^{1}(x)\delta_{\widetilde{W}}
=\underline{\rho}\underline{u} \ \underline{v}\mathsf{I}_{\Omega_{2}}\mathcal{L}^{2}+w_{n}^{1}(x)\delta_{W_{f}}+\widetilde{w_{n}^{1}}(x)\delta_{S},
\end{aligned}\\
&&\begin{aligned}\label{4.12}
&m^{2}=\underline{\rho}\underline{u}\, \underline{v}\mathsf{I}_{\Omega_{2}}\mathcal{L}^{2}+w_{m}^{2}(x)\delta_{\widetilde{W}}
=\underline{\rho}\underline{u}\, \underline{v}\mathsf{I}_{\Omega_{2}}\mathcal{L}^{2}+w_{m}^{2}(x)\delta_{W_{f}}+\widetilde{w_{m}^{2}}(x)\delta_{S},
\\ &n^{2}=\underline{\rho}\underline{v}^{2}\mathsf{I}_{\Omega_{2}}\mathcal{L}^{2}+w_{n}^{2}(x)\delta_{\widetilde{W}}
=\underline{\rho}\underline{v}^{2}\mathsf{I}_{\Omega_{2}}\mathcal{L}^{2}+w_{n}^{2}(x)\delta_{W_{f}}+\widetilde{w_{n}^{2}}(x)\delta_{S},  \ \ \  \wp=0,
\end{aligned}\\
&&\begin{aligned}
&m^{3}=E_{0}\mathsf{I}_{\Omega_{1}}\mathcal{L}^{2}+\underline{\rho}\underline{u}\, \underline{E}\mathsf{I}_{\Omega_{2}}\mathcal{L}^{2}
+w_{m}^{3}(x)\delta_{W_{f}}+\widetilde{w_{m}^{3}}(x)\delta_{S}, \\
&n^{3}=\underline{\rho}\underline{v} \ \underline{E}\mathsf{I}_{\Omega_{2}}\mathcal{L}^{2}+w_{n}^{3}(x)\delta_{W_{f}}+\widetilde{w_{n}^{3}}(x)\delta_{S},
\end{aligned}
\end{eqnarray}
where $\widetilde{w_{m}^{i}}(x),\widetilde{w_{n}^{i}}(x) \ (i=0,1,2,3)$ are functions to be solved.

\medskip
The computation mimics previous sections. Substituting \eqref{4.5} into \eqref{4.1},  we get for $x\in[0, x_*]$,
\begin{equation}
w_{m}^{0}(0)=0, \ \ w_{n}^{0}(x)=w_{m}^{0}(x)b'(x), \ \  \dfrac{\dd(w_{m}^{0}(x)\sqrt{1+b'(x)^{2}})}{\dd x}=b'(x),
\end{equation}
and for $x\ge x_*$,
\begin{equation}
\begin{aligned}
&\widetilde{w_{m}^{0}}(x_{*})\sqrt{1+s'(x_{*})^{2}}=w_{m}^{0}(x_{*})\sqrt{1+b'(x_{*})^{2}}=b(x_{*}), \\
&\dfrac{\dd(\widetilde{w_{m}^{0}}(x)\sqrt{1+s'(x)^{2}})}{\dd x}=\underline{\rho}\underline{v}+(1-\underline{\rho}\underline{u})s'(x), \  \widetilde{w_{n}^{0}}(x)=s'(x)\widetilde{w_{m}^{0}}(x).
\end{aligned}
\end{equation}
Hence
\begin{equation}
\begin{aligned}
&\widetilde{w_{m}^{0}}(x)=\dfrac{\underline{\rho}\underline{v}(x-x_{*})-\underline{\rho}\underline{u}(s(x)-b(x_{*}))+s(x)}{\sqrt{1+s'(x)^{2}}}, \\
&\widetilde{w_{n}^{0}}(x)=\dfrac{\underline{\rho}\underline{v}(x-x_{*})-\underline{\rho}\underline{u}(s(x)-b(x_{*}))+s(x)}{\sqrt{1+s'(x)^{2}}}s'(x),
\end{aligned}
\end{equation}
and similarly
\begin{equation}
\begin{aligned}
\widetilde{w_{m}^{3}}(x)=&\dfrac{\underline{\rho}\underline{v}\ \underline{E}(x-x_{*})-\underline{\rho}\underline{u}\, \underline{E}(s(x)-b(x_{*}))+E_{0}s(x)}{\sqrt{1+s'(x)^{2}}},\\
\widetilde{w_{n}^{3}}(x)=&\dfrac{\underline{\rho}\underline{v}\ \underline{E}(x-x_{*})-\underline{\rho}\underline{u}\, \underline{E}(s(x)-b(x_{*}))+E_{0}s(x)}{\sqrt{1+s'(x)^{2}}}s'(x).
\end{aligned}
\end{equation}
Note that these equations imply the slip condition \eqref{1.9} on free layer.

By \eqref{4.2} and \eqref{4.9}  we have
\begin{equation}
\begin{aligned}
&\widetilde{w_{m}^{1}}(x_{*})\sqrt{1+s'(x_{*})^{2}}=w_{m}^{1}(x_{*})\sqrt{1+b'(x_{*})^{2}}=\dfrac{H(x_{*})}{\sqrt{1+b'(x_{*})^{2}}},  \\ &\widetilde{w_{n}^{1}}(x)=s'(x)\widetilde{w_{m}^{1}}(x), \ \  \  \dfrac{\dd(\widetilde{w_{m}^{1}}(x)\sqrt{1+s'(x)^{2}})}{\dd x}=\underline{\rho}\underline{u}\, \underline{v}+(1-\underline{\rho}\underline{u}^{2})s'(x).
\end{aligned}
\end{equation}
Consequently for $x\ge x_*,$
\begin{equation}
\begin{aligned}
\widetilde{w_{m}^{1}}(x)=\dfrac{\underline{\rho}\underline{u}\, \underline{v}(x-x_{*})+(1-\underline{\rho}\underline{u}^{2})(s(x)-b(x_{*}))+\dfrac{H(x_{*})}{\sqrt{1+b'(x_{*})^{2}}}}{\sqrt{1+s'(x)^{2}}}.
\end{aligned}
\end{equation}
Moreover, applying \eqref{4.12}, \eqref{4.3} is reduced to
\begin{equation}
\begin{aligned}
&\widetilde{w_{m}^{2}}(x_{*})\sqrt{1+s'(x_*)^{2}}=w_{m}^{2}(x_{*})\sqrt{1+b'(x_*)^{2}}=\dfrac{b'(x_{*})H(x_{*})}{\sqrt{1+b'(x_{*})^{2}}}, \\  &\widetilde{w_{n}^{2}}(x)=s'(x)\widetilde{w_{m}^{2}}(x),\ \ \   \dfrac{\dd(\widetilde{w_{m}^{2}}(x)\sqrt{1+s'(x)^{2}})}{\dd x}=\underline{\rho}\underline{v}^{2}-\underline{\rho}\underline{u}\, \underline{v}s'(x).
\end{aligned}
\end{equation}
It follows
\begin{equation}
\begin{aligned}
&\widetilde{w_{m}^{2}}(x)=\dfrac{\underline{\rho}\underline{v}^{2}(x-x_{*})-\underline{\rho}\underline{u}\, \underline{v}(s(x)-b(x_{*}))+\dfrac{b'(x_{*})H(x_{*})}{\sqrt{1+b'(x_{*})^{2}}}}{\sqrt{1+s'(x)^{2}}},
\end{aligned}
\end{equation}
and by \eqref{2.8},
\begin{equation}
\begin{aligned}
&u|_{S}=\dfrac{\underline{\rho}\underline{u}\, \underline{v}(x-x_{*})+(1-\underline{\rho}\underline{u}^{2})(s(x)-b(x_{*}))+\dfrac{H(x_{*})}{\sqrt{1+b'(x_{*})^{2}}}}{{\underline{\rho}\underline{v}(x-x_{*})-\underline{\rho}\underline{u}(s(x)-b(x_{*}))+s(x)}},\\ &v|_{S}=\dfrac{\underline{\rho}\underline{v}^{2}(x-x_{*})-\underline{\rho}\underline{u}\, \underline{v}(s(x)-b(x_{*}))+\dfrac{b'(x_{*})H(x_{*})}{\sqrt{1+b'(x_{*})^{2}}}}{{\underline{\rho}\underline{v}(x-x_{*})-\underline{\rho}\underline{u}(s(x)-b(x_{*}))+s(x)}}.
\end{aligned}
\end{equation}
So we discover that
\begin{equation}\label{4.16}
\begin{aligned}
u=&\mathsf{I}_{\Omega_{1}}+\underline{u}\mathsf{I}_{\Omega_{2}}+\dfrac{H(x)}{b(x)\sqrt{1+b'(x)^{2}}}\mathsf{I}_{W_{f}}
\\&+\dfrac{\underline{\rho}\underline{u}\, \underline{v}(x-x_{*})+(1-\underline{\rho}\underline{u}^{2})(s(x)-b(x_{*}))
+\dfrac{H(x_{*})}{\sqrt{1+b'(x_{*})^{2}}}}{\underline{\rho}\underline{v}(x-x_{*}) -\underline{\rho}\underline{u}(s(x)-b(x_{*}))+s(x)}\mathsf{I}_{S},
\\ v=&\underline{v}\mathsf{I}_{\Omega_{2}}+\dfrac{b'(x)H(x)}{b(x)\sqrt{1+b'(x)^{2}}}\mathsf{I}_{W_{f}}
\\&+\dfrac{\underline{\rho}\underline{v}^{2}(x-x_{*})-\underline{\rho}\underline{u}\, \underline{v}(s(x)-b(x_{*}))+\dfrac{b'(x_{*})H(x_{*})}{\sqrt{1+b'(x_{*})^{2}}}}{\underline{\rho}\underline{v}(x-x_{*})-\underline{\rho}\underline{u}(s(x)-b(x_{*}))+s(x)}\mathsf{I}_{S},
\end{aligned}
\end{equation}
\begin{equation}
\begin{aligned}
E=&E_{0}\mathsf{I}_{\Omega_{1}}+\underline{E}\mathsf{I}_{\Omega_{2}}
+E_{0}\mathsf{I}_{W_{f}}\\&
+\dfrac{\underline{\rho}\underline{v}\ \underline{E}(x-x_{*})-\underline{\rho}\underline{u}\, \underline{E}(s(x)-b(x_{*}))+E_{0}s(x)}{\underline{\rho}\underline{v}(x-x_{*})-\underline{\rho}\underline{u}(s(x)-b(x_{*}))+s(x)}\mathsf{I}_{S},
\end{aligned}
\end{equation}
and the measure of density is
\begin{equation}\label{4.17}
\begin{aligned}
\varrho=&\mathsf{I}_{\Omega_{1}}\LL^{2}+\underline{\rho}\mathsf{I}_{\Omega_{2}}\LL^{2}+\dfrac{(b(x))^{2}}{H(x)}\mathsf{\delta}_{W_{f}}\\
&+\dfrac{{[\underline{\rho}\underline{v}(x-x_{*})-\underline{\rho}\underline{u}(s(x)-b(x_{*}))+s(x)]}^{2}}{\sqrt{1+s'(x)^{2}}[\underline{\rho}\underline{u}\, \underline{v}(x-x_{*})+(1-\underline{\rho}\underline{u}^{2})(s(x)-b(x_{*}))+\dfrac{H(x_{*})}{\sqrt{1+b'(x_{*})^{2}}}]}\delta_{S}.\\
\end{aligned}
\end{equation}

So what left is to determine $s(x)$.
Using the slip condition \eqref{1.9}, from \eqref{4.16}  we have the following ordinary differential equations:
\begin{equation}\label{4.18}
\left \{
\begin{aligned}
&\underline{\rho}\underline{v}^{2}(x-x_{*})-\underline{\rho}\underline{u}\, \underline{v}(s(x)-b(x_{*}))+\dfrac{b'(x_{*})H(x_{*})}{\sqrt{1+b'(x_{*})^{2}}}\\
&\qquad=s'(x)[\underline{\rho}\underline{u}\, \underline{v}(x-x_{*})+(1-\underline{\rho}\underline{u}^{2})(s(x)-b(x_{*}))+\dfrac{H(x_{*})}{\sqrt{1+b'(x_{*})^{2}}}],\\
&s(x_{*})=b(x_{*}).
\end{aligned} \right.
\end{equation}

\begin{lemma}
\noindent 1) If $1-\underline{\rho}\underline{u}^{2}=0$ and $\underline{v}/{\underline{u}}\ge b'(x_{*})$, \eqref{4.18} has a unique solution $y=s(x)$, which is of the order $\displaystyle\frac{\underline{v}}{2\underline{u}}x$ as $x\to\infty$, and {$y=s(x)$} is a free layer satisfying entropy condition \eqref{45}, separating the limiting hypersonic flow above it from the pressureless jet below it.

 \noindent 2) For $1-\underline{\rho}\underline{u}^{2}\neq0$ and $\underline{v}/{\underline{u}}\ge b'(x_{*})$,
\eqref{4.18} has a solution $y=s(x)$ which is of the order $\displaystyle \frac{\sqrt{\underline{\rho}}\underline{v}x}{1+\sqrt{\underline{\rho}}\underline{u}}$ as $x\to\infty$, and similar conclusion as in 1) also holds.
\end{lemma}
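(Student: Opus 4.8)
The plan is to read \eqref{4.18} as a single first-order ordinary differential equation of linear-fractional type and to exploit its affine numerator and denominator. First I would normalize by setting $\xi=x-x_{*}$ and $\eta(\xi)=s(x)-b(x_{*})$, turning the Cauchy problem into $\eta(0)=0$ together with
\[
\frac{d\eta}{d\xi}=\frac{N(\xi,\eta)}{D(\xi,\eta)},\qquad
N\triangleq\underline{\rho}\underline{v}^{2}\xi-\underline{\rho}\underline{u}\,\underline{v}\,\eta+b'(x_{*})C,\quad
D\triangleq\underline{\rho}\underline{u}\,\underline{v}\,\xi+(1-\underline{\rho}\underline{u}^{2})\eta+C,
\]
where $C\triangleq H(x_{*})/\sqrt{1+b'(x_{*})^{2}}>0$ since $H(x_{*})>0$. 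Evaluating at $\xi=0$ gives $s'(x_{*})=b'(x_{*})$, so the free layer leaves the corner tangent to the ramp, and the hypothesis $\underline{v}/\underline{u}\ge b'(x_{*})\ge0$ places this initial slope inside the entropy window $[\,v_{0}/u_{0},\underline{v}/\underline{u}\,]=[\,0,\underline{v}/\underline{u}\,]$ demanded by \eqref{45}.

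For item 1), where $1-\underline{\rho}\underline{u}^{2}=0$, the $\eta$-term disappears from $D$ and \eqref{4.18} collapses to the genuinely linear equation $(\underline{\rho}\underline{u}\,\underline{v}\,\xi+C)\eta'+\underline{\rho}\underline{u}\,\underline{v}\,\eta=\underline{\rho}\underline{v}^{2}\xi+b'(x_{*})C$. Because $\underline{\rho}\underline{u}\,\underline{v}=\underline{v}/\underline{u}\ge0$ and $C>0$, the leading coefficient $\underline{\rho}\underline{u}\,\underline{v}\,\xi+C$ is bounded below by $C$ on $\xi\ge0$, so an integrating factor produces a unique global solution in closed form. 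Substituting the ansatz $\eta\sim c\xi$ into the leading balance forces $2(\underline{v}/\underline{u})c=\underline{v}^{2}/\underline{u}^{2}$, i.e. $c=\underline{v}/(2\underline{u})$, which is the stated growth rate.

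For item 2), where $G\triangleq1-\underline{\rho}\underline{u}^{2}\neq0$, I would reduce \eqref{4.18} to a homogeneous equation. The determinant governing the intersection of the affine lines $N=0$ and $D=0$ computes to $\underline{\rho}\underline{v}^{2}(1-\underline{\rho}\underline{u}^{2})+\underline{\rho}^{2}\underline{u}^{2}\underline{v}^{2}=\underline{\rho}\underline{v}^{2}\neq0$, so these lines meet at a unique point; translating the origin there removes the constants and leaves $dq/dp=(\underline{\rho}\underline{v}^{2}p-\underline{\rho}\underline{u}\,\underline{v}\,q)/(\underline{\rho}\underline{u}\,\underline{v}\,p+Gq)$. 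The substitution $w=q/p$ then separates variables, and the admissible asymptotic slopes are precisely the roots of $Gc^{2}+2\underline{\rho}\underline{u}\,\underline{v}\,c-\underline{\rho}\underline{v}^{2}=0$; its discriminant collapses to $4\underline{\rho}\underline{v}^{2}$, giving $c_{+}=\sqrt{\underline{\rho}}\,\underline{v}/(1+\sqrt{\underline{\rho}}\,\underline{u})$ and $c_{-}=-\sqrt{\underline{\rho}}\,\underline{v}/(1-\sqrt{\underline{\rho}}\,\underline{u})$. A short check shows $c_{+}\in[0,\underline{v}/\underline{u}]$ whereas $c_{-}$ lies outside this interval, so $c_{+}$ is the growth rate claimed in the statement and the explicit separated solution yields $s(x)$ in closed (implicit) form.

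The hard part will be upgrading these local/formal computations to the global assertions: that the graph $y=s(x)$ extends to all $x\ge x_{*}$, that \eqref{45} holds throughout (not merely at $\xi=0$), and that the density weight in \eqref{4.17} stays positive. I would handle all three at once by an invariant-region argument for the slope $s'=N/D$. The crucial observation is that $D'=\underline{\rho}\underline{u}\,\underline{v}+(1-\underline{\rho}\underline{u}^{2})s'\ge0$ whenever $0\le s'\le\underline{v}/\underline{u}$, so $D\ge C>0$ cannot degenerate while the slope sits in the window; this rules out the roll-up breakdown seen in Case 4 of Theorem \ref{thm1.2}. With $D>0$ secured, differentiating $s'=N/D$ gives $s''>0$ on the locus $\{s'=0\}$ and $s''<0$ on the locus $\{s'=\underline{v}/\underline{u}\}$, so both boundaries of the entropy window are reflecting and $s'$ is trapped in $[0,\underline{v}/\underline{u}]$ for all $\xi\ge0$. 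A standard continuation/bootstrap then gives global existence, and since $c_{+}$ is the only asymptotic slope compatible with this trapped range, the spurious branch $c_{-}$ is automatically excluded and $s'(\xi)\to c_{+}$ (resp. $\underline{v}/(2\underline{u})$), confirming both the entropy admissibility and the asserted asymptotic orders.
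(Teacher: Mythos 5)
Your proposal reaches the same explicit solutions and asymptotic rates as the paper, but by a genuinely different route, and the difference is worth recording. The paper exploits the fact that \eqref{4.18} is exact: it integrates once to obtain a quadratic relation in $s$, solves it in closed form (\eqref{422} for $1-\underline{\rho}\underline{u}^{2}=0$, \eqref{4.19} with discriminant $\spadesuit$ as in \eqref{4.21} otherwise), and then verifies the entropy window $0\le s'\le \underline{v}/\underline{u}$ and the positivity of the denominator $d(x)$ of \eqref{423} by direct algebraic manipulation of these formulas (sign of $\spadesuit$, absence of roots beyond $x_*$ of an auxiliary quadratic, monotonicity of $d$). You instead obtain the closed form in case 2) by the classical translation-to-the-intersection-point plus homogeneous substitution $w=q/p$, and---more interestingly---you replace the paper's formula-chasing by a qualitative invariant-region argument: computing $s''$ on the loci $\{s'=0\}$ and $\{s'=\underline{v}/\underline{u}\}$ shows both barriers are reflecting, which simultaneously traps the slope in the entropy window of \eqref{45}, keeps the ODE denominator $D$ bounded below by $H(x_*)/\sqrt{1+b'(x_*)^2}>0$ (so no roll-up as in Case 4 of Theorem \ref{thm1.2}), and forces convergence of $s'$ to the unique admissible root $c_+$, which yields the stated growth rates. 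This is cleaner and more robust than the paper's computations, and your identification of the characteristic equation $Gc^2+2\underline{\rho}\underline{u}\,\underline{v}c-\underline{\rho}\underline{v}^2=0$ with discriminant $4\underline{\rho}\underline{v}^2$ is a nice way to see where $\sqrt{\underline{\rho}}\,\underline{v}/(1+\sqrt{\underline{\rho}}\,\underline{u})$ comes from.

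Two small holes to patch. First, the hypotheses permit $\underline{v}=0$ (forcing $b'(x_*)=0$), in which case your determinant $\underline{\rho}\underline{v}^{2}$ vanishes and the translation to the intersection point is unavailable; but then $N\equiv 0$, so $s\equiv b(x_*)$ and everything is trivial---just say so explicitly, as the paper does around \eqref{430add}. Second, your invariant region controls the denominator $D$ of the ODE (equivalently of the density weight in \eqref{4.17}), but the velocities $u|_S$, $v|_S$ and the numerator of that weight involve the different affine quantity $d(x)$ of \eqref{423}, whose nonvanishing must be checked separately; fortunately $d(x_*)=b(x_*)>0$ and $d'=\underline{\rho}\underline{v}+(1-\underline{\rho}\underline{u})s'\ge 0$ follows from exactly the slope window you have established (use $s'\ge0$ when $\underline{\rho}\underline{u}\le1$ and $s'\le\underline{v}/\underline{u}$ otherwise), so this is a one-line addition rather than a real obstruction.
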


\begin{proof}

1) If $1-\underline{\rho}\underline{u}^{2}=0$,  then \eqref{4.18} is linear and we get
\begin{equation}\label{422}
\begin{aligned}
&s(x)=\dfrac{\underline{\rho}\underline{v}^{2}(x-x_{*})^{2}+2\left[\underline{\rho}\underline{u} \ \underline{v}b(x_{*})
+\dfrac{b'(x_{*})H(x_{*})}{\sqrt{1+b'(x_{*})^{2}}}\right](x-x_{*})+\dfrac{2b(x_{*})H(x_{*})}{\sqrt{1+b'(x_{*})^{2}}}}
{2\underline{\rho}\underline{u} \ \underline{v}(x-x_{*})+\dfrac{2H(x_{*})}{\sqrt{1+b'(x_{*})^{2}}}}.
\end{aligned}
\end{equation}
Since $\underline{v}/{\underline{u}}\ge b'(x_{*})\ge0$ and $H(x_*)>0$, the denominator is positive and $s(x)$ is defined for all $x\ge x_*$.
By straightforward computations, the denominator
\begin{equation}\label{423}
d(x)\triangleq\underline{\rho}\underline{v}(x-x_{*})-\underline{\rho}\underline{u}(s(x)-b(x_{*}))+s(x)
\end{equation}
 in \eqref{4.16} is also positive for all $x\ge x_*$. \footnote{For {$\underline{\rho}\underline{u}\le1$} or equivalently {$\underline{u}\ge1$}, this is obvious; for $\underline{u}\in(0,1)$, we are led to show the quadratic function $$\underline{\rho}\underline{v}^2(1+\underline{u})(x-x_*)^2+\Big[2\underline{v}b(x_*) +\frac{2H(x_*)}{\sqrt{1+b'(x_*)^2}}(\frac{\underline{v}}{\underline{u}}-b'(x_*) +\underline{u}b'(x_*))\Big](x-x_*)+{ \frac{2(1+\underline{u})b(x_*)H(x_*)}{\sqrt{1+b'(x_*)^2}}}$$
is positive for all $x\ge x_*$, which is simple.} {So does the denominator of the last term in \eqref{4.17}.} Thus the measure solution \eqref{4.16}-\eqref{4.17}\eqref{422} is well-defined.

Considering entropy condition \eqref{45}, since $v_0=0, u_0=1$, we need to check that $s'(x)\ge0$ for all $x\ge x_*$. This is equivalent to the left-hand side of \eqref{4.18} being nonnegative on $[x_*, \infty)$. The verification is also straightforward by substituting \eqref{422} into it. To show $s'(x)\le \underline{v}/\underline{u}$, we are led to prove
$$s(x)-b(x_*)\ge\underbrace{\frac{H(x_*)}{\sqrt{1+b'(x_*)^2}}\frac{\underline{u}}{\underline{v}}}_{>0}\underbrace{(b'(x_*)-\frac{\underline{v}}{\underline{u}})}_{\le0},\quad \forall x\ge x_*.$$
This is obvious since it holds at $x=x_*$, and recall we have proved that $s'(x)\ge0$ for $x\ge x_*$.

2) If $1-\underline{\rho}\underline{u}^{2}\neq 0$,  from \eqref{4.18} we have
\begin{equation}\label{4.19}
s(x)=\dfrac{(1-\underline{\rho}\underline{u}^{2})b(x_{*})-\underline{\rho}\underline{u}\,  \underline{v}(x-x_{*})-\dfrac{H(x_{*})}{\sqrt{1+b'(x_{*})^{2}}}}{1-\underline{\rho}\underline{u}^{2}} +\dfrac{\sqrt{\spadesuit}}{1 -\underline{\rho}\underline{u}^{2}}.
\end{equation}
Since $\underline{v}/{\underline{u}}\ge b'(x_{*})\ge0$, we get that $\underline{\rho}\underline{u}\,  \underline{v}+(1-\underline{\rho}\underline{u}^{2})b'(x_{*})\geq 0$. This means that for any $x\geq x_*$, we have
\begin{equation}\label{4.21}
\begin{aligned}
&\spadesuit\triangleq\underline{\rho}\underline{v}^{2}(x-x_{*})^{2}+2[\underline{\rho}\underline{u}\, \underline{v}+(1-\underline{\rho}\underline{u}^{2})b'(x_{*})]\dfrac{H(x_{*})}{\sqrt{1+b'(x_{*})^{2}}}(x -x_{*})+(\dfrac{H(x_{*})}{\sqrt{1+b'(x_{*})^{2}}})^{2}> 0.
\end{aligned}
\end{equation}
Therefore \eqref{4.19} is defined on $[x_*, \infty)$ and of the order $\displaystyle \frac{\sqrt{\underline{\rho}}\underline{v}x}{1+\sqrt{\underline{\rho}}\underline{u}}$ as $x\to\infty$.

Next it is easy to see that for all $x\ge x_*$,
\begin{eqnarray}\label{426}
\underline{\rho}\underline{u}\, \underline{v}(x-x_{*})+(1-\underline{\rho}\underline{u}^{2})(s(x)-b(x_{*})) +\dfrac{H(x_{*})}{\sqrt{1+b'(x_{*})^{2}}}=\sqrt{\spadesuit}>0.\end{eqnarray}
Supposing that $b'(x_*)>0$, to show
\begin{equation}\label{427}
\underline{\rho}\underline{v}^{2}(x-x_{*})-\underline{\rho}\underline{u}\, \underline{v}(s(x)-b(x_{*}))+\dfrac{b'(x_{*})H(x_{*})}{\sqrt{1+b'(x_{*})^{2}}}>0,
\end{equation}
straightforward computation requires
\begin{equation}
(1-\underline{\rho} \underline{u}^2)[\sqrt{\spadesuit}-\alpha(x-x_*)-\beta]<0,\quad \alpha=\frac{\underline{v}}{\underline{u}}, \ \ \beta=\frac{H(x_*)}{\sqrt{1+b'(x_*)^2}}\left(1+b'(x_*)\frac{1-\underline{\rho}\underline{u}^2} {\underline{\rho}\underline{u}\, \underline{v}}\right).
\end{equation}
Since this holds at $x=x_*$, we just need to prove that there is no root greater than $x_*$ to the equation $$\sqrt{\spadesuit}=\alpha(x-x_*)+\beta.$$
In fact, squaring the equation and after some manipulation, we have
\begin{multline*}
\left(\frac{\underline{v}}{\underline{u}}\right)^2(x-x_*)^2+2\frac{H(x_*)}{\sqrt{1+b'(x_*)^2}}
\left(\frac{\underline{v}}{\underline{u}}-b'(x_*) +\frac{1}{\underline{\rho}\underline{u}^2}b'(x_*)\right)(x-x_*)\\
+\frac{H(x_*)^2}{1+b'(x_*)^2}\frac{b'(x_*)}{\underline{\rho}\underline{u}\, \underline{v}}\left(\frac{b'(x_*)}{\underline{\rho}\underline{u}\,\underline{v}} +2-\frac{b'(x_*)\underline{u}}{\underline{v}}\right)=0,
\end{multline*}
and the claim follows by recalling that $\frac{\underline{v}}{\underline{u}}-b'(x_*)\ge0$. If $b'(x_*)=0$ and $\underline{v}>0$, we easily check that \begin{equation}\label{430add}
\underline{\rho}\underline{v}^{2}(x-x_{*})-\underline{\rho}\underline{u}\, \underline{v}(s(x)-b(x_{*}))+\dfrac{b'(x_{*})H(x_{*})}{\sqrt{1+b'(x_{*})^{2}}}\ge0,\quad \forall x\ge x_*,
\end{equation}
and the equality holds only at $x=x_*.$ The equality holds for all $x\ge x_*$ in \eqref{430add} if $\underline{v}=0$ and $b'(x_*)=0$.

From \eqref{4.18}, \eqref{426} and \eqref{427}, \eqref{430add}, we have shown
\begin{equation}\label{429}
s'(x)\ge0\quad\qquad \forall x\ge x_*.
\end{equation}
Next, to prove
\begin{equation}\label{430}
s'(x)\le\frac{\underline{v}}{\underline{u}},\quad\qquad \forall x\ge x_*,
\end{equation}
by \eqref{4.18}, we shall verify
$$s(x)-b(x_*)+\frac{H(x_*)}{\sqrt{1+b'(x_*)}}(1-b'(x_*)\frac{\underline{u}}{\underline{v}})\ge0.$$
It holds at $x=x_*$ and for $x>x_*$ it follows from \eqref{429}. Thus the entropy condition is satisfied by the free layer.

Finally we need to check that $d(x)$ defined in \eqref{423} is positive on $[x_*, \infty)$. Note that $d(x_*)=b(x_*)>0$, and $d'(x)=\underline{\rho}\underline{v}+(1-\underline{\rho}\underline{u})s'(x),$ which is nonnegative, since if $\underline{\rho}\underline{u}\le1$, we use \eqref{429}, and otherwise, by \eqref{430}, $d'(x)\ge\underline{\rho}\underline{v}+(1-\underline{\rho}\underline{u})\frac{\underline{v}}{\underline{u}}=\frac{\underline{v}}{\underline{u}}\ge b'(x_*)\ge0.$ We therefore have a well-defined measure solution of Problem 3 given by \eqref{4.16}-\eqref{4.17}\eqref{4.19}, which also satisfies the entropy condition.
\end{proof}

 This finishes proof of 1) in Theorem \ref{thm1.3}.

\subsection{Construction of measure solutions containing vacuum}\label{sec42}

In the previous subsection we consider the case that the jet impinges on the free layer. It may happen that there is vacuum between the free layer and the pressureless jet. In this section we consider this case. It is known that the only classical discontinuity connecting vacuum and pressureless jet is a contact discontinuity \cite{Cheng2011Delta}.

Suppose the free layer is
$$S_{u}\triangleq\{(x,y)\in \Bbb R^{2}:~x\geq x_{*},\ y=h(x)\},$$
and the contact discontinuity is
$$S_{d}\triangleq\{(x,y)\in \Bbb R^{2}:~x\geq x_{*},\ y=c(x)\},$$
with $y=h(x), \ y=c(x)$  two functions to be determined.
The domain we consider is then
$$\widetilde{\Omega}\triangleq\Omega_{1}\cup\Omega_{2}\cup\Omega_{3},$$
where
\begin{eqnarray*}
&&\Omega_{1}\triangleq\{(x,y)\in \Bbb R^{2}:~0<x\leq x_{*},\ y>b(x)  \ \text{and} \ \mathnormal{x>x_{*},\ y>h(x)}\},\\
&&\Omega_{2}\triangleq\{(x,y)\in \Bbb R^{2}:~x>x_{*},\ y<c(x)\},\\
&&\Omega_{3}\triangleq\{(x,y)\in  \Bbb R^{2}:~x>x_{*},\ c(x)<y<h(x)\}
\end{eqnarray*}
represent the region above $S_{u}$, below $S_{d}$, and between $S_{u}$ and $S_{d}$, respectively.
Above $S_{u}$, there is uniform limiting hypersonic flow given by \eqref{1.6};
The uniform jet below  $S_{d}$ is given by \eqref{1.10}, with pressure $\underline{p}=0$.
The region $\Omega_{3}$ lies between $S_{u}$ and $S_{d}$ is vacuum.

In the following we construct a measure solution to Problem 3 with the above structure.
Denote $\widetilde{W}=W_{f}\cup S_{u}$. Let
\begin{eqnarray}
&&\begin{aligned}
&m^{0}=\mathsf{I}_{\Omega_{1}}\mathcal{L}^{2}+
\underline{\rho}\underline{u}\mathsf{I}_{\Omega_{2}}\mathcal{L}^{2}+w_{m}^{0}(x)\delta_{\widetilde{W}}
=\mathsf{I}_{\Omega_{1}}\mathcal{L}^{2}+\underline{\rho}\underline{u}\mathsf{I}_{\Omega_{2}}\mathcal{L}^{2}+w_{m}^{0}(x)\delta_{W_{f}}
+\widetilde{w_{m}^{0}}(x)\delta_{S_{u}},
\\  &n^{0}=\underline{\rho}\underline{v}\mathsf{I}_{\Omega_{2}}\mathcal{L}^{2}+w_{n}^{0}(x)\delta_{\widetilde{W}}
=\underline{\rho}\underline{v}\mathsf{I}_{\Omega_{2}}\mathcal{L}^{2}+w_{n}^{0}(x)\delta_{W_{f}}+\widetilde{w_{n}^{0}}(x)\delta_{S_{u}},
\end{aligned}\quad\label{4.25}\\
&&\begin{aligned}\label{4.28}
&m^{1}=\mathsf{I}_{\Omega_{1}}\mathcal{L}^{2}+\underline{\rho}\underline{u}^{2}\mathsf{I}_{\Omega_{2}}\mathcal{L}^{2}
+w_{m}^{1}(x)\delta_{\widetilde{W}}=\mathsf{I}_{\Omega_{1}}\mathcal{L}^{2}+\underline{\rho}\underline{u}^{2}\mathsf{I}_{\Omega_{2}}
\mathcal{L}^{2}+w_{m}^{1}(x)\delta_{W_{f}}+\widetilde{w_{m}^{1}}(x)\delta_{S_{u}}, \\
&n^{1}=\underline{\rho}\underline{u} \ \underline{v}\mathsf{I}_{\Omega_{2}}\mathcal{L}^{2}+w_{n}^{1}(x)\delta_{\widetilde{W}}
=\underline{\rho}\underline{u} \ \underline{v}\mathsf{I}_{\Omega_{2}}\mathcal{L}^{2}+w_{n}^{1}(x)\delta_{W_{f}}
+\widetilde{w_{n}^{1}}(x)\delta_{S_{u}},  \ \ \  \wp=0,
\end{aligned}\quad\\
&&\begin{aligned}\label{4.31}
&m^{2}=\underline{\rho}\underline{u}\, \underline{v}\mathsf{I}_{\Omega_{2}}\mathcal{L}^{2}+w_{m}^{2}(x)\delta_{\widetilde{W}}
=\underline{\rho}\underline{u}\, \underline{v}\mathsf{I}_{\Omega_{2}}\mathcal{L}^{2}+w_{m}^{2}(x)\delta_{W_{f}}+\widetilde{w_{m}^{2}}(x)\delta_{S_{u}},
\\ &n^{2}=\underline{\rho}\underline{v}^{2}\mathsf{I}_{\Omega_{2}}\mathcal{L}^{2}+w_{n}^{2}(x)\delta_{\widetilde{W}}
=\underline{\rho}\underline{v}^{2}\mathsf{I}_{\Omega_{2}}\mathcal{L}^{2}+w_{n}^{2}(x)\delta_{W_{f}}+\widetilde{w_{n}^{2}}(x)\delta_{S_{u}},
\end{aligned}\\
&&\begin{aligned}
&m^{3}=E_{0}\mathsf{I}_{\Omega_{1}}\mathcal{L}^{2}+\underline{\rho}\underline{u}\, \underline{E}\mathsf{I}_{\Omega_{2}}\mathcal{L}^{2}+w_{m}^{3}(x)\delta_{W_{f}}
+\widetilde{w_{m}^{3}}(x)\delta_{S_{u}},\\
& n^{3}=\underline{\rho}\underline{v}\ \underline{E}\mathsf{I}_{\Omega_{2}}\mathcal{L}^{2}+w_{n}^{3}(x)\delta_{W_{f}}+\widetilde{w_{n}^{3}}(x)\delta_{S_{u}}.
\end{aligned}
\end{eqnarray}
Here $\widetilde{w_{m}^{i}}(x), \ \widetilde{w_{n}^{i}}(x)\ (i=0, 1, 2, 3)$ are unknown functions.

\medskip

Like before, substituting \eqref{4.25} into \eqref{4.1}, some direct calculation yields
\begin{equation}
\begin{aligned}
&\widetilde{w_{m}^{0}}(x_{*})\sqrt{1+h'(x_{*})^{2}}=w_{m}^{0}(x_{*})\sqrt{1+b'(x_{*})^{2}}=b(x_{*}), \\
&\dfrac{\dd(\widetilde{w_{m}^{0}}(x)\sqrt{1+h'(x)^{2}})}{\dd x}=h'(x), \  \ \
\widetilde{w_{n}^{0}}(x)=h'(x)\widetilde{w_{m}^{0}}(x), \ \ \underline{v}=c'(x)\underline{u}.
\end{aligned}
\end{equation}
So we get
\begin{equation}\label{436}
\begin{aligned}
&\widetilde{w_{m}^{0}}(x)=\dfrac{h(x)}{\sqrt{1+h'(x)^{2}}}, \ \ \  \widetilde{w_{n}^{0}}(x)=\dfrac{h'(x)h(x)}{\sqrt{1+h'(x)^{2}}},  \ \ \
\underline{v}=c'(x)\underline{u},
\end{aligned}
\end{equation}
which imply particularly the slip condition on the free layer and contact discontinuity.
Similarly we also have
\begin{equation}
\widetilde{w_{m}^{3}}(x)=\dfrac{E_{0}h(x)}{\sqrt{1+h'(x)^{2}}},\quad
{\widetilde{w_{n}^{3}}(x)}
=\dfrac{E_{0}h(x)h'(x)}{\sqrt{1+h'(x)^{2}}}.
\end{equation}
By \eqref{4.28} and \eqref{4.2} we find
\begin{equation}\label{4.30}
\begin{aligned}
&\widetilde{w_{m}^{1}}(x_{*})\sqrt{1+h'(x_{*})^{2}}=w_{m}^{1}(x_{*})\sqrt{1+b'(x_{*})^{2}}=\dfrac{H(x_{*})}{\sqrt{1+b'(x_{*})^{2}}},
 \\ &\widetilde{w_{n}^{1}}(x)=h'(x)\widetilde{w_{m}^{1}}(x), \ \  \  \dfrac{\dd(\widetilde{w_{m}^{1}}(x)\sqrt{1+h'(x)^{2}})}{\dd x}=h'(x),\ \ \
\underline{v}=c'(x)\underline{u}.
\end{aligned}
\end{equation}
One then solves that
\begin{equation}
\begin{aligned}
\widetilde{w_{m}^{1}}(x)=\dfrac{h(x)-b(x_{*})+\dfrac{H(x_{*})}{\sqrt{1+b'(x_{*})^{2}}}}{\sqrt{1+h'(x)^{2}}},\ \ \
\underline{v}=c'(x)\underline{u}.
\end{aligned}
\end{equation}
Substituting \eqref{4.31} into \eqref{4.3},  one deduces that
\begin{equation}
\begin{aligned}
&\widetilde{w_{m}^{2}}(x_{*})\sqrt{1+h'(x_*)^{2}}=w_{m}^{2}(x_{*})\sqrt{1+b'(x_*)^{2}}=\dfrac{b'(x_{*})H(x_{*})}{\sqrt{1+b'(x_{*})^{2}}},
 \\  &\widetilde{w_{n}^{2}}(x)=h'(x)\widetilde{w_{m}^{2}}(x),\ \ \   \dfrac{\dd(\widetilde{w_{m}^{2}}(x)\sqrt{1+h'(x)^{2}})}{\dd x}=0, \ \ \
\underline{v}=c'(x)\underline{u}.
\end{aligned}
\end{equation}
Therefore
\begin{equation}
\begin{aligned}
&\widetilde{w_{m}^{2}}(x)=\dfrac{\dfrac{b'(x_{*})H(x_{*})}{\sqrt{1+b'(x_{*})^{2}}}}{\sqrt{1+h'(x)^{2}}},  \ \ \
\underline{v}=c'(x)\underline{u}.
\end{aligned}
\end{equation}
Thanks to \eqref{2.8}, we see
\begin{equation}\label{4.36}
u|_{S_{u}}=\dfrac{h(x)-b(x_{*})+\dfrac{H(x_{*})}{\sqrt{1+b'(x_{*})^{2}}}}{h(x)},
\quad v|_{S_{u}}=\dfrac{\dfrac{b'(x_{*})H(x_{*})}{\sqrt{1+b'(x_{*})^{2}}}}{h(x)}.
\end{equation}
Therefore
\begin{equation}\label{443}
\begin{aligned}
&u=\mathsf{I}_{\Omega_{1}}+\underline{u}\mathsf{I}_{\Omega_{2}}+\dfrac{H(x)}{b(x)\sqrt{1+b'(x)^{2}}}\mathsf{I}_{W_{f}}+\dfrac{h(x)-b(x_{*})
+\dfrac{H(x_{*})}{\sqrt{1+b'(x_{*})^{2}}}}{h(x)}\mathsf{I}_{S_{u}},\\
 &v=\underline{v}\mathsf{I}_{\Omega_{2}}+\dfrac{b'(x)H(x)}{b(x)\sqrt{1+b'(x)^{2}}}\mathsf{I}_{W_{f}}
 +\dfrac{\dfrac{b'(x_{*})H(x_{*})}{\sqrt{1+b'(x_{*})^{2}}}}{h(x)}\mathsf{I}_{S_{u}},\\
 &E=E_0\mathsf{I}_{\Omega_{1}}+\underline{E}\mathsf{I}_{\Omega_{2}}+E_0\mathsf{I}_{W_{f}}+E_0\mathsf{I}_{S_{u}}
\end{aligned}
\end{equation}
and the measure of density is
\begin{equation}\label{444}
\begin{aligned}
&\varrho=\mathsf{I}_{\Omega_{1}}\LL^{2}+\underline{\rho}\mathsf{I}_{\Omega_{2}}\LL^{2}+\dfrac{(b(x))^{2}}{H(x)}
\mathsf{\delta}_{W_{f}}+\dfrac{(h(x))^{2}}{\sqrt{1+h'(x)^{2}}[h(x)-b(x_{*})+\dfrac{H(x_{*})}{\sqrt{1+b'(x_{*})^{2}}}]}\delta_{S_{u}}.\\
\end{aligned}
\end{equation}

\medskip
Next we solve $h(x)$.
Using the slip condition implied by \eqref{436} on $S_{u}$, we have
\begin{equation}\label{445}
\left \{
\begin{aligned}
&\dfrac{b'(x_{*})H(x_{*})}{\sqrt{1+b'(x_{*})^{2}}}=h'(x)[h(x)-b(x_{*})+\dfrac{H(x_{*})}{\sqrt{1+b'(x_{*})^{2}}}],\\
&h(x_{*})=b(x_{*}).
\end{aligned} \right.
\end{equation}
The solution is
\begin{equation}\label{4.39}
\begin{aligned}
&h(x)=\sqrt{\dfrac{2H(x_{*})b'(x_{*})}{\sqrt{1+b'(x_{*})^{2}}}(x-x_{*})+(\dfrac{H(x_{*})}{\sqrt{1+b'(x_{*})^{2}}})^{2}}+b(x_{*})-\dfrac{H(x_{*})}
{\sqrt{1+b'(x_{*})^{2}}}.
\end{aligned}
\end{equation}
It is the same as \eqref{3.31}, while representing the situation that below the free layer is vacuum.

From \eqref{436} we also infer that  $\underline{v}=c'(x)\underline{u}$.
So $S_{d}$, the contact discontinuity, is a straight line
\begin{equation}\label{4.42}
\begin{aligned}
&c(x)=\displaystyle\dfrac{\underline{v}}{\underline{u}}(x-x_{*})+b(x_{*}).
\end{aligned}
\end{equation}

From \eqref{4.39} and \eqref{4.42}, we infer that if the pressureless jet satisfies the requirement that $\underline{v}\leq 0$, the for all $x\geq x_{*}$, $h'(x)\ge0$ and $c'(x)\le0$, hence  $h(x)\geq c(x)$, and the vacuum between the limiting hypersonic flow  and the pressureless jet is unbounded, see Figure \ref{fig7}.

\vspace{-3.5cm}
\begin{figure}[H]
\centering
\includegraphics[width=6in]{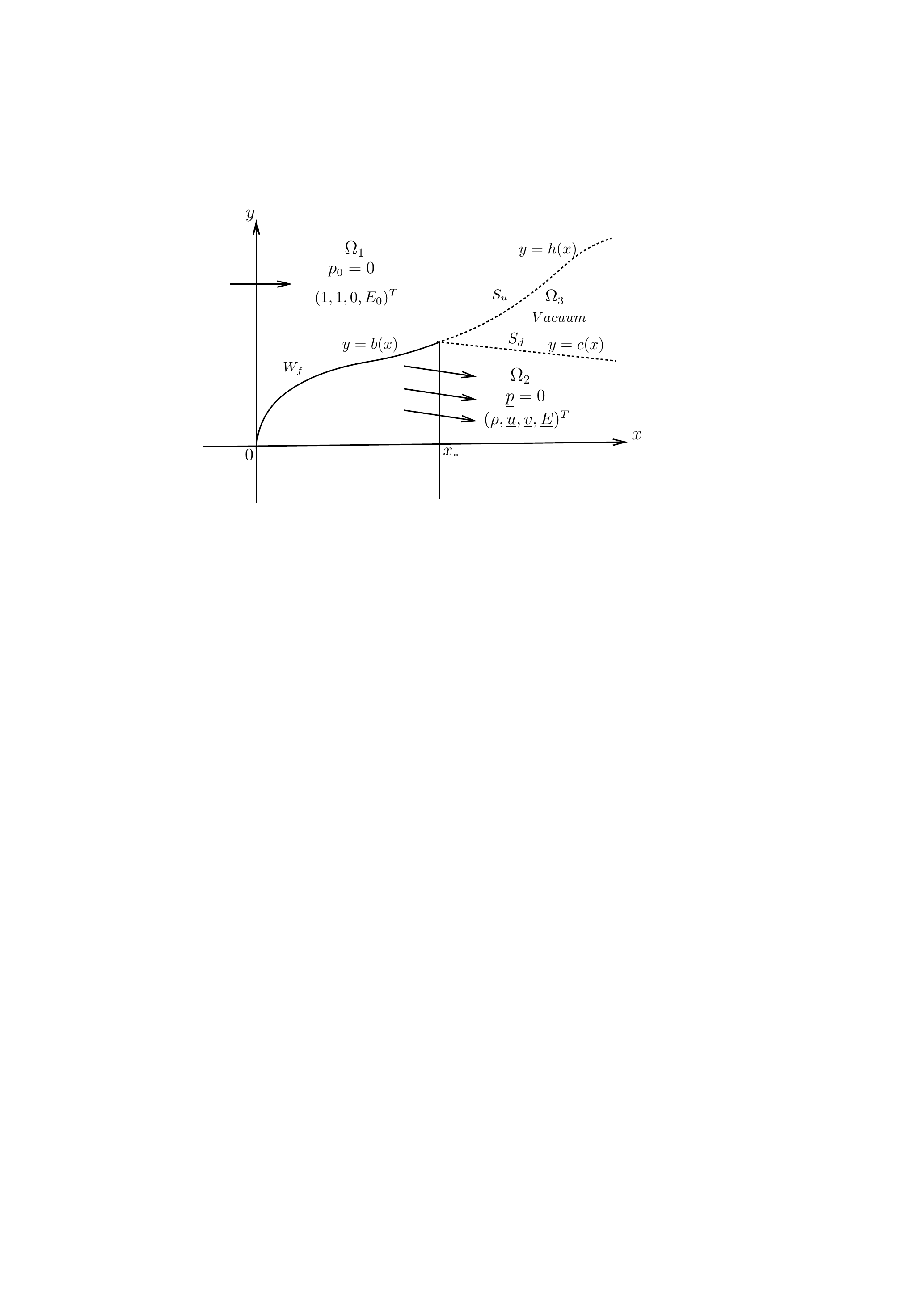}\\
\vspace{-13cm}\caption{Unbounded vacuum between free layer and pressureless jet $(\underline{v}\leq 0)$.}\label{fig7}
\end{figure}

\medskip
What happens if $0< \displaystyle\frac{\underline{v}}{\underline{u}}<b'(x_{*})$? Simple computation shows that on $$x_{*}\leq x\le x^{\vartriangle}\triangleq x_{*}+\displaystyle\frac{2(\underline{u}b'(x_{*})-\underline{v})\underline{u}}{\underline{v}^{2}}\dfrac{H(x_{*})}{\sqrt{1+b'(x_{*})^{2}}},$$ it holds that $h(x)\geq c(x)$. The free layer meets the contact discontinuity at the point $(x^{\vartriangle}, h(x^{\vartriangle}))$, with $h(x^{\vartriangle})=\displaystyle\frac{2(\underline{u}b'(x_{*})-\underline{v})}{\underline{v}}\dfrac{H(x_{*})}{\sqrt{1+b'(x_{*})^{2}}}+b(x_{*}).$ Then from \eqref{445} we have $h'(x^{\vartriangle})=\displaystyle\frac{b'(x_{*})\underline{v}}{2\underline{u}b'(x_{*})-\underline{v}}$.

It is easy to show that for $0< \displaystyle\frac{\underline{v}}{\underline{u}}<b'(x_{*})$, it holds that  $h'(x^{\vartriangle})\leq \displaystyle\frac{\underline{v}}{\underline{u}}.$ Therefore considering a problem like Problem 3 with $x=x_*$ replaced by  $x=x^{\vartriangle}$, we have a case studied in Section \ref{sec41}: {\it the free layer absorbs the contact discontinuity}.  See Figure \ref{fig9}.

Using the method in Section \ref{sec41}, we could determine the free layer starting from the colliding point as follows.

\medskip

(1) If $\underline{\rho}\underline{u}^{2}\ne1$, then for $x\ge x^{\vartriangle}$,
\begin{equation}\label{451}
\begin{aligned}
s(x)=&\dfrac{(1-\underline{\rho}\underline{u}^{2})h(x^{\vartriangle})-\underline{\rho}\underline{u}\, \underline{v}(x-x^{\vartriangle})-\displaystyle\frac{2\underline{u}b'(x_{*})
-\underline{v}}{\underline{v}}\dfrac{H(x_{*})}{\sqrt{1+b'(x_{*})^{2}}}}{1-\underline{\rho}\underline{u}^{2}}{+\dfrac{\sqrt{(*)}}{1 -\underline{\rho}\underline{u}^{2}}},
\end{aligned}
\end{equation}
where
\begin{equation*}
\begin{aligned}
   (*)=&{\underline{\rho}\underline{v}^{2}(x-x^{\vartriangle})^{2}+}2[\underline{\rho}\underline{u}\, \underline{v}\cdot\displaystyle\frac{2\underline{u}b'(x_{*})-\underline{v}}{\underline{v}}
+(1-\underline{\rho}\underline{u}^{2})b'(x_{*})]\dfrac{H(x_{*})}{\sqrt{1 +b'(x_{*})^{2}}}(x-x^{\vartriangle})\\
&+(\displaystyle\frac{2\underline{u}b'(x_{*})-
\underline{v}}{\underline{v}}\cdot\dfrac{H(x_{*})}{\sqrt{1+b'(x_{*})^{2}}})^{2}.
\end{aligned}
  \end{equation*}

(2) If $\underline{\rho}\underline{u}^{2}=1$, then for $x\ge x^{\vartriangle}$,
\begin{equation}\label{452}
\begin{aligned}
s(x)=&\dfrac{\underline{\rho}\underline{v}^{2}(x-x^{\vartriangle})^{2}+2[\dfrac{b'(x_{*})H(x_{*})}{\sqrt{1 +b'(x_{*})^{2}}}+ \underline{\rho}\underline{u}\, \underline{v}h(x^{\vartriangle})](x-x^{\vartriangle})+C}{2\underline{\rho}\underline{u}\, \underline{v}(x-x^{\vartriangle})+2\cdot\displaystyle\frac{2\underline{u}b'(x_{*})
-\underline{v}}{\underline{v}}\dfrac{H(x_{*})}{\sqrt{1+b'(x_{*})^{2}}}},
\end{aligned}
\end{equation}
where
$$C=2h(x^{\vartriangle})\cdot\displaystyle\frac{2\underline{u}b'(x_{*})
-\underline{v}}{\underline{v}}\dfrac{H(x_{*})}{\sqrt{1+b'(x_{*})^{2}}}.$$

In summary, we proved the following lemma.

\begin{lemma}\label{lem4.2}
\noindent 1) If $\underline{v}\leq 0$, then Problem 3 has a measure solution that consists of three piecewise constant states, with the one lying middle is an unbounded vacuum.

\noindent 2) If $0< \displaystyle\frac{\underline{v}}{\underline{u}}<b'(x_{*})$, then Problem 3 has a measure solution consists of three piecewise constant states, with the one in the middle a finite vacuum bounded by the curves given by \eqref{4.39}\eqref{4.42}, lying in $\{x_{*}\leq x\leq x^{\vartriangle}\}$.
In particular, if $x\geq x^{\vartriangle}$, there is no vacuum and the solution is two piecewise constant.
\end{lemma}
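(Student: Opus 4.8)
The plan is to treat \eqref{443}\eqref{444} as a candidate measure solution already assembled above under the ansatz that $\Omega_3$ is vacuum, and to verify case-by-case that this ansatz is geometrically self-consistent. All the weight equations and the slip conditions were solved without ever using the position of $\Omega_3$ relative to $S_d$; what remains is to check that the free layer $y=h(x)$ from \eqref{4.39} and the contact discontinuity $y=c(x)$ from \eqref{4.42} bound a genuine (nonempty) vacuum region, and, when they meet, that the construction of Section \ref{sec41} can be restarted. Since both curves emanate from the common corner $(x_*,b(x_*))$, the whole analysis reduces to comparing their slopes.

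For part 1), I would observe that when $\underline{v}\le0$ the line \eqref{4.42} has slope $c'(x)=\underline{v}/\underline{u}\le0$, while $h$ in \eqref{4.39} is nondecreasing because the radicand has nonnegative derivative (using $b'(x_*)\ge0$ and $H(x_*)>0$). As $h(x_*)=c(x_*)=b(x_*)$, it follows at once that $h(x)\ge c(x)$ for all $x\ge x_*$, with equality only at $x_*$. Thus $\Omega_3$ is nonempty and unbounded, the three-phase structure is consistent, and \eqref{443}\eqref{444} is a global measure solution. This disposes of part 1) with essentially no computation.

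For part 2), the slope of $c$ is now positive, so $h$ and $c$ can cross. I would solve $h(x)=c(x)$ by isolating the square root in \eqref{4.39} and squaring; the hypothesis $0<\underline{v}/\underline{u}<b'(x_*)$ makes the governing coefficient positive and yields the single admissible root $x^\vartriangle=x_*+\frac{2(\underline{u}b'(x_*)-\underline{v})\underline{u}}{\underline{v}^2}\frac{H(x_*)}{\sqrt{1+b'(x_*)^2}}$. A sign comparison of the two increasing curves then gives $h(x)\ge c(x)$ precisely on $[x_*,x^\vartriangle]$, so the bounded vacuum triangle is consistent there; and from the ODE \eqref{445} I would read off the contact value $h(x^\vartriangle)$ together with the terminal slope $h'(x^\vartriangle)=\frac{b'(x_*)\underline{v}}{2\underline{u}b'(x_*)-\underline{v}}$.

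The main obstacle, and the only genuinely delicate point, is showing that at the collision the free layer can be prolonged by absorbing the contact discontinuity, i.e. that the state at $(x^\vartriangle,h(x^\vartriangle))$ falls into the regime of Section \ref{sec41}. The key inequality is the entropy condition \eqref{45}, which here amounts to $0\le h'(x^\vartriangle)\le\underline{v}/\underline{u}$; the lower bound is automatic from monotonicity, and I would verify the upper bound $h'(x^\vartriangle)\le\underline{v}/\underline{u}$ directly from the formula for $h'(x^\vartriangle)$ using $0<\underline{v}/\underline{u}<b'(x_*)$. Once this is in hand, I would reinterpret the configuration for $x\ge x^\vartriangle$ as a fresh instance of Problem 3 with the initial line $x=x_*$ moved to $x=x^\vartriangle$ and the jet now filling the region below the merged layer; applying the construction of Section \ref{sec41} verbatim (and rechecking the positivity of the analogous denominator $d(x)$ from \eqref{423}) produces the prolonged free layer \eqref{451} when $\underline{\rho}\underline{u}^2\ne1$ and \eqref{452} when $\underline{\rho}\underline{u}^2=1$. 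This closes part 2) and hence the lemma.
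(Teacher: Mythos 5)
Your proposal is correct and follows essentially the same route as the paper: part 1) by the slope comparison $c'(x)=\underline{v}/\underline{u}\le 0$ versus $h'(x)\ge 0$ from the common starting point $(x_*,b(x_*))$, and part 2) by locating the collision point $x^{\vartriangle}$, checking $h'(x^{\vartriangle})\le \underline{v}/\underline{u}$ so that the entropy condition \eqref{45} permits the free layer to absorb the contact discontinuity, and then rerunning the Section \ref{sec41} construction from $x^{\vartriangle}$ to obtain \eqref{451} and \eqref{452}. No substantive differences from the paper's argument.
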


This completes the proof of Theorem \ref{thm1.3}.

\vspace{-3.5cm}
\begin{figure}[H]
\centering
\includegraphics[width=6in]{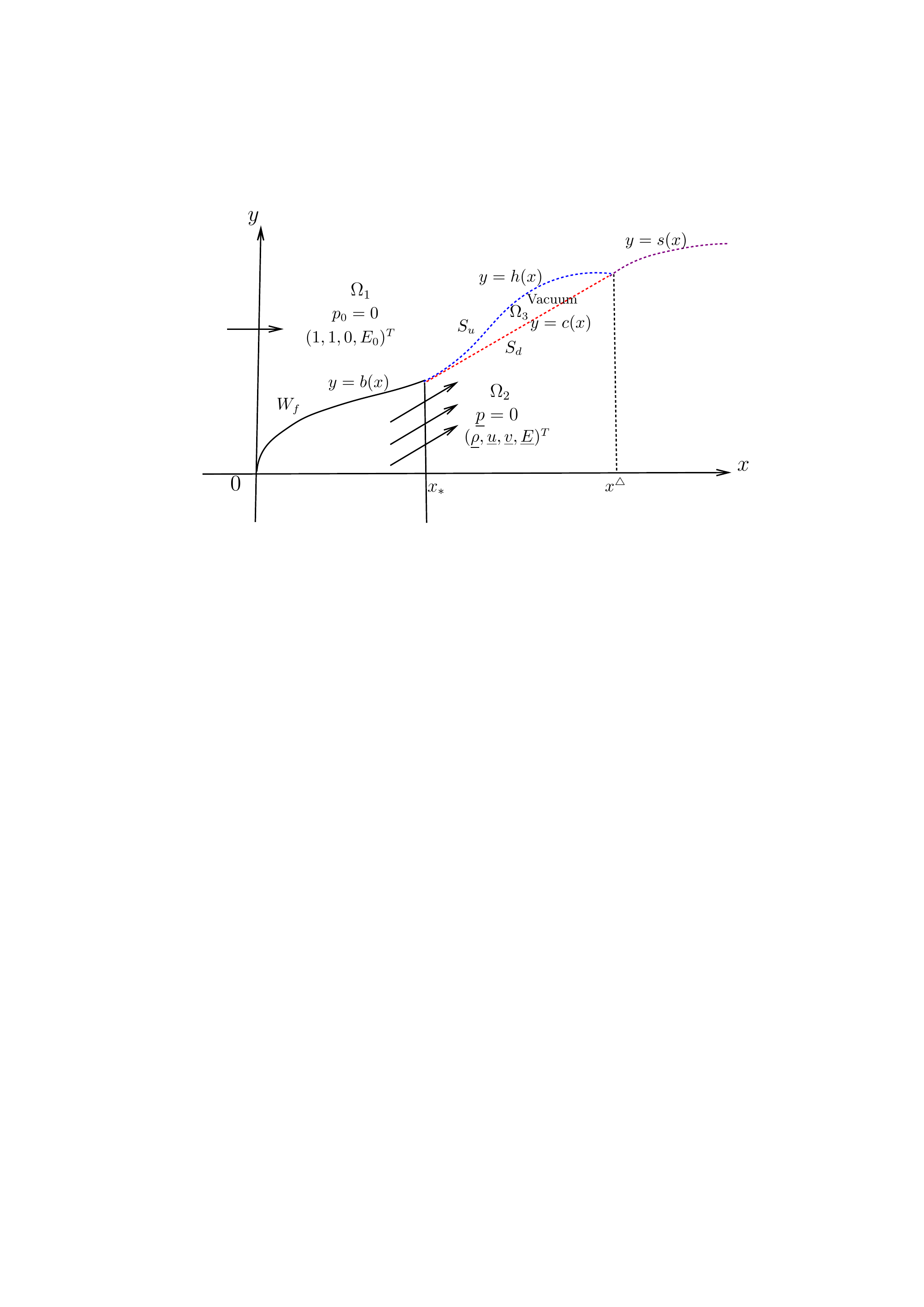}\\
\vspace{-12cm}\caption{Bounded vacuum between the limiting hypersonic flow and pressureless jet when $0<\displaystyle\frac{\underline{v}}{\underline{u}}<b'(x_{*})$.}\label{fig9}
\end{figure}

\section{Discussions}\label{sec5}
Considering hypersonic flows, Louie and Ockendon wrote in \cite[p.121]{LO1991}: ``Although inviscid models have limited practical value, it is important to understand them  as well as possible if theoretical progress is to be made with more complicated models for real gases."  This paper maybe considered a part of progress to understand inviscid hypersonic flows. We studied three typical aerodynamical problems on limiting hypersonic flows by using a concept of Radon measure solutions of compressible Euler equations we proposed. We proved succinctly the well-known Newton-Busemann pressure law that was firstly given by Busemann in 1930s, and constructed various flow fields with free layers, and discovered blow up of some measure solutions. These results demonstrate the power of our concept of measure solutions.
However, the uniqueness and stability of the measure solutions we constructed remain interesting open problems. To solve them some meticulous refinement of our definition
might be necessary.

\medskip

To solve Problem 3, noticing that $\widetilde{\Omega}_{1}\cup \widetilde{\Omega}_{2}\triangleq\{(x,y)\in \Bbb R^{2}:0< x\leq x_{*}, \ y>b(x)\}\cup\{(x,y)\in \Bbb R^{2}:x> x_{*},\ y\in\Bbb R\},$
using solutions of Problem 1 in $\widetilde{\Omega}_{1}$, we only need to solve a special boundary value  problem of
\eqref{1.4} in $\widetilde{\Omega}_{2}$, subjected to boundary conditions given on the line $\{x=x_*\}$:
\begin{equation}\label{3.34}
\begin{aligned}
&\varrho(x_{*},y)=\mathsf{I}_{\{y>b(x_{*})\}}\LL^1+\dfrac{(b(x_{*}))^{2}}{H(x_{*})}\delta_{\{y=b(x_{*})\}} +\underline{\rho}\mathsf{I}_{\{y<b(x_{*})\}}\LL^1,\\
&u(x_{*},y)=\mathsf{I}_{\{y>b(x_{*})\}}+\dfrac{H(x_{*})}{b(x_{*})\sqrt{1+b'(x_{*})^{2}}} \mathsf{I}_{\{y=b(x_{*})\}}+\underline{u}\mathsf{I}_{\{y<b(x_{*})\}},\\
&  v(x_{*},y)=\dfrac{b'(x_{*})H(x_{*})}{b(x_{*})\sqrt{1+b'(x_{*})^{2}}}\mathsf{I}_{\{y=b(x_{*})\}} +\underline{v}\mathsf{I}_{\{y<b(x_{*})\}},\\
&E(x_{*},y)=E_{0}\mathsf{I}_{\{y>b(x_{*})\}}+E_{0}\mathsf{I}_{\{y=b(x_{*})\}} +\underline{E}\mathsf{I}_{\{y<b(x_{*})\}},
\end{aligned}
\end{equation}
where $\LL^1$ is the Lebesgue measure on real line $\mathbb{R}$.
Unlike the classical Riemann problems, there is a Dirac measure supported on the discontinuity point $(x_*,b(x_*))$ for the density. We call such problems as {\it Singular Riemann Problems}.

We know that there have been many studies on singular Riemann problems from the mathematical point of view (see, for example, \cite{Guo2018The, Wang2012THE, Wang2016The,Yang2007The} and references therein). Our studies of limiting hypersonic flows show that such problems are also models of significant physical problems, hence they required a systematic investigation.

Comparing to classical Riemann problems, solutions to singular Riemann problems may no longer be self-similar and it may contain rather complex wave structures (such as bounded vacuum in Problem 3). If we consider the interactions of limiting hypersonic flows with supersonic polytropic gas jets, the situation is more complicated. We will report the results in another paper.

\section*{Acknowledgements}
The research of Aifang Qu is supported by National Natural
Science Foundation of China (NNSFC) under Grants No.11571357 and
No.11871218;
Hairong Yuan is supported by NNSFC under Grant No.11871218, and by
Science and Technology Commission of Shanghai Municipality (STCSM)
under Grant No.18dz2271000.

\end{document}